\DeclareRobustCommand{\cev}[1]{%
  \mathpalette\do@cev{#1}%
}
\newcommand{\do@cev}[2]{%
  \fix@cev{#1}{+}%
  \reflectbox{$\m@th#1\vec{\reflectbox{$\fix@cev{#1}{-}\m@th#1#2\fix@cev{#1}{+}$}}$}%
  \fix@cev{#1}{-}%
}
\newcommand{\fix@cev}[2]{%
  \ifx#1\displaystyle
    \mkern#23mu
  \else
    \ifx#1\textstyle
      \mkern#23mu
    \else
      \ifx#1\scriptstyle
        \mkern#22mu
      \else
        \mkern#22mu
      \fi
    \fi
  \fi
}
\newcommand{\sign}{\mathop{\mathrm{sgn}}}
\renewcommand{\Re}{\mathop{\mathrm{Re}}}
\newcommand{\kla}{\scalebox{0.7}{$\sim$}}
\newcommand{\ger}{{\scalebox{0.7}{$\rightarrow$}}}
\newcommand{\gel}{{\scalebox{0.7}{$\leftarrow$}}}
\newcommand{\kzg}{{\scalebox{0.7}{$\leadsto$}}}
\newcommand{\kindone}{{1$^{\text{st}}$ }}
\newcommand{\kindtwo}{{2$^{\text{nd}}$ }}
\newtheorem{theorem}{Theorem}
\newtheorem{proposition}[theorem]{Proposition}%
\newtheorem{corollary}[theorem]{Corollary}
\newtheorem{lemma}[theorem]{Lemma}
\newtheorem{definition}[theorem]{Definition}%
\newtheorem{example}[theorem]{Example}%
\newenvironment{myproof}{\begin{proof}}{\end{proof}}
\begin{document}

\title{Hermitian Adjacency Matrices of Mixed Graphs}

\author{Mohammad Abudayah\footnote{School of Basic Sciences and Humanities, German Jordanian University, Amman, Jordan, mohammad.abudayah@gju.edu.jo}, 
Omar Alomari\footnote{School of Basic Sciences and Humanities, German Jordanian University, Amman, Jordan, omar.alomari@gju.edu.jo},
Torsten Sander\footnote{Fakult\"at f\"ur Informatik, Ostfalia Hochschule f\"ur angewandte Wissenschaften, Wolfenb\"uttel, Germany, t.sander@ostfalia.de}}

\maketitle

\begin{abstract}
The traditional adjacency matrix of a mixed graph is not symmetric in general, hence its eigenvalues may be not real.
To overcome this obstacle, several authors have recently defined and studied various Hermitian adjacency matrices of digraphs or mixed graphs.
In this work we unify previous work and offer a new perspective on the subject by introducing the concept of monographs. 
Moreover, we consider questions of cospectrality.

\medskip

{\bf Keywords:} mixed graphs, oriented graphs, graph spectra, eigenvalues

{\bf MSC Classification:} Primary 05C50; Secondary 15A18

\end{abstract}

\section{Introduction}\label{intro}

Algebraic graph theory strives to relate the structural properties of graphs to the
algebraic properties of objects associated with them. Specifically, in spectral graph theory
the eigenvalues and eigenvectors of matrices associated with graphs are studied. Most traditionally,
the object of interest would be the adjacency matrix of some undirected graph, i.e., the square matrix $[a_{uv}]$
such that $a_{uv}=1$ if there is an edge between vertices $u$ and $v$, otherwise $a_{uv}=0$.
By construction, the adjacency matrix of an undirected graph is symmetric. Hence theorems from
linear algebra dealing with non-negative symmetric matrices can be readily applied to obtain a number of
desirable spectral properties. Most notably, the spectrum of the adjacency matrix is real.
Moreover, there exists a basis of pairwise orthogonal eigenvectors.

However, when dealing with directed or mixed graphs the definition of the adjacency matrix needs to be changed to accommodate the fact
that the adjacency relation of vertices is no longer symmetric. For a digraph we set $a_{uv}=1$ if there is an arc from vertex number
$u$ to $v$ and $a_{uv}=0$ otherwise. The loss of symmetry proves a serious impediment to relating algebraic and 
structural properties to one another, cf.\ the survey \cite{Bru}.

Quite recently, the idea has been presented to modify the definition of the adjacency matrix of a directed graph, using complex numbers, in such a way
that it still properly reflects the adjacency relation but at the same time constitutes a Hermitian matrix. Let us give an overview of some efforts and results in this direction.
In \cite{BM1} the authors use the imaginary number $i$ to specify $a_{uv}=i$ and $a_{vu}=-i$ whenever there is an arc from $u$ to $v$, but not vice versa, further
$a_{uv}=1$ whenever $u$ and $v$ are mutually adjacent.
Using this definition of a Hermitian adjacency matrix, it turns out that many results from algebraic graph theory known for undirected graphs also hold 
for directed graphs or at least exist in a slightly modified or weaker version.
For example, if the underlying undirected graph of a given oriented graph is bipartite, then the spectrum of the Hermitian adjacency matrix 
is symmetric with respect to zero, but -- in contrast to the undirected case -- the reverse is not true.
Independently, the authors of \cite{Liu} introduced the same notion of a Hermitian adjacency matrix and
proved many fundamental results. Moreover, they considered the Gutman energy (which is the sum of the absolute
values of all eigenvalues) of the Hermitian adjacency matrix.
Refer to \cites{LiYu,YWGQ,jova} for other related work. 

The goal of the present paper is as follows. To begin with, we generalize and unify previous results. 
We will then introduce the concept of monographs, permitting us to view some of these results from a new perspective.
Moreover, we will analyze under which conditions a mixed graph has identical spectra for different values of $\alpha$.

\section{\bf Preliminaries}\label{sec:pd}

All graphs considered hereafter shall not contain any loops or multiple edges.
A mixed graph $D$ arises from partially orienting an undirected graph $G$, i.e.\ by turning some of 
the undirected edges into single arcs. 
Thus, between any two adjacent vertices $u,v$ of the vertex set $V(G)$ there exists either an arc from $u$ to $v$ (indicated by $u\ger{}v$), an arc from $v$ to $u$ (indicated by $u\gel{}v$), or an
undirected edge (also called a digon) between $u$ und $v$ (indicated by $u\kla{}v$). Altogether, these arcs and digons form the edge set $E(D)$ of $D$. 
The graph $G$ is called the underlying graph $\Gamma(D)$ of the mixed graph $D$.
Much of the traditional terminology (e.g.\ being regular, being connected, vertex degree $\deg(\cdot)$, maximum degree $\Delta$) that is used for undirected graphs simply carries over to mixed graphs,
in the sense that, $D$ is said to have a property whenever $\Gamma(D)$ has this property.
In particular, we say that a mixed graph contains a certain undirected subgraph (e.g.\ the path $P_k$ or the cycle $C_k$ on $k$ vertices)
if $\Gamma(D)$ contains this subgraph.
A mixed walk in $D$ is a sequence of vertices $v_1,\ldots,v_k$ of $D$ such that there is an edge between any two
subsequent vertices $v_iv_{i+1}$ in $D$.
The set of all arcs from some vertex $u$ to other vertices $v$ (resp.\ from other vertices to $u$) is denoted by $N_D^{+}(u)$ (resp.\ $N_D^{-}(u)$).
The set of all digons incident with vertex $u$ is denoted by $N_D(u)$.

The (traditional) adjacency matrix $A(G) = [a_{ij} ]$ of a given, either undirected or directed, graph $G$ on $n$ vertices 
is the real matrix of order $n\times n$ such that $a_{ij} = 1$ if there is an edge from $v_i$ to $v_j$ and $a_{ij} = 0$ otherwise. 
For directed graphs, the resulting matrix $A$ is usually non-symmetric, thus losing many desirable algebraic properties. We therefore define
the following alternative:

\begin{definition}\label{halpha}
Given a mixed graph $D$ and	a unit complex number $\alpha$, i.e.\ $\left\vert \alpha \right\vert =1$, we define the $\alpha$\hyp{}Hermitian adjacency matrix $H^\alpha(D)=[h_{uv}]$ of $D$ by
\begin{align}
h_{uv} = \begin{cases}
1 & \text{if }  u\kla v,\\
\alpha &  \text{if }  u\ger v,\\
\bar{\alpha} &  \text{if }  u\gel v,\\
0 & \text{otherwise}.
\end{cases}
\end{align}
\end{definition}

When there is no ambiguity regarding the reference graph $D$ we will often omit any symbolic reference to $D$,
e.g.\ write $H^\alpha$ instead of $H^\alpha(D)$.

Clearly, the matrix $H^\alpha$ from \Cref{halpha} is Hermitian, i.e.\ $(H^\alpha)^\ast=(H^\alpha)$ where
$M^\ast$ denotes the conjugate transpose of matrix $M$. 
By $\chi_\alpha(D,x)=\det(xI-H^\alpha(D))$, where $I$ is the identity matrix, we denote the characteristic polynomial of the matrix $H^\alpha(D)$, 
calling this the $\alpha$\hyp{}characteristic polynomial of $D$. The multiset $\sigma_\alpha(D)$ of all roots of  $\chi_\alpha(D,x)$ is called the $\alpha$\hyp{}spectrum of $D$,
as opposed to the (traditional) spectrum $\sigma(\Gamma(D))$ of the underlying undirected graph $\Gamma(D)$.
Consequently, we shall refer to the elements of  $\sigma_\alpha(D)$ as the $\alpha$\hyp{}eigenvalues of $D$. Likewise, we speak of $\alpha$\hyp{}eigenvectors. 
Note that $\alpha$\hyp{}eigenvalues are always real. 

A direct consequence of \Cref{halpha} is the following summation rule characterizing $\alpha$\hyp{}eigenvectors:
\begin{proposition}\label{srule}
	Let $D$ be a mixed graph. Then $x$ is an $\alpha$\hyp{}eigenvector of $D$ corresponding to $\alpha$\hyp{}eigenvalue $\lambda$ if and only if, for each $u\in V(D)$,
	\begin{align}\label{eq:srule}
   	\lambda x(u) = \sum_{u\kla v}{x(v)} + \left(\alpha \sum_{u\ger v}{x(v)}\right) + \left(\bar{\alpha} \sum_{u\gel v}{x(v)}\right).
	\end{align}
\end{proposition}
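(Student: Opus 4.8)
The plan is to observe that this proposition is nothing more than the coordinatewise reading of the matrix equation $H^\alpha(D)\,x = \lambda x$, combined with a case split coming directly from \Cref{halpha}. So the first step is to recall that, by definition, $x$ (a nonzero vector indexed by $V(D)$) is an $\alpha$\hyp{}eigenvector of $D$ for the $\alpha$\hyp{}eigenvalue $\lambda$ precisely when $H^\alpha(D)\,x = \lambda x$, and that this single vector identity holds if and only if it holds in every coordinate $u \in V(D)$, i.e.\ $\bigl(H^\alpha(D)\,x\bigr)(u) = \lambda\, x(u)$ for all $u$.

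Next I would expand the left-hand side as $\bigl(H^\alpha(D)\,x\bigr)(u) = \sum_{v \in V(D)} h_{uv}\, x(v)$ and partition the index set $V(D)$ according to the relationship between $u$ and $v$. Since $D$ has no loops we have $h_{uu}=0$, and for $v \neq u$ exactly one of the following occurs: $u \kla v$, in which case $h_{uv}=1$; $u \ger v$, in which case $h_{uv}=\alpha$; $u \gel v$, in which case $h_{uv}=\bar\alpha$; or none of these, in which case $h_{uv}=0$. Grouping the terms accordingly collapses the sum to $\sum_{u \kla v} x(v) + \alpha \sum_{u \ger v} x(v) + \bar\alpha \sum_{u \gel v} x(v)$, which is exactly the right-hand side of \eqref{eq:srule}. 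Equating this with $\lambda\, x(u)$ and quantifying over all $u \in V(D)$ gives the claimed equivalence.

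There is no real obstacle here; the only thing to be slightly careful about is that the three edge-type relations together with ``non-adjacent'' genuinely partition $V(D)\setminus\{u\}$, so that no term is counted twice and none is omitted — this is guaranteed by the definition of a mixed graph, in which any two adjacent vertices are joined by exactly one of an arc $u\ger v$, an arc $u\gel v$, or a digon $u\kla v$. With that remark the proof is complete.
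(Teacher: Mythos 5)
Your proof is correct and is exactly the argument the paper has in mind: the paper presents \Cref{srule} as a ``direct consequence'' of \Cref{halpha} without writing out a proof, and your coordinatewise expansion of $H^\alpha(D)\,x=\lambda x$ together with the case split on $h_{uv}$ is precisely that direct consequence, spelled out. Nothing is missing.
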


Throughout this paper we shall assume $\vert\alpha\vert = 1$, i.e.\ $\alpha = e^{i \theta }$ for some $\theta\in\mathbb{R}$. Moreover, we make use of the
constants $\omega:=e^{\frac{\pi}{3}i}$ (a sixth root of unity) and $\gamma:=e^{\frac{2\pi}{3}i}$ (a third root of unity).
In the context of Hermitian adjacency matrices, the former constant has been endorsed in \cite{BM2}, whereas the suitability of the latter constant will become evident later on.

\section{\bf Characteristic Polynomial of \texorpdfstring{$H^\alpha(D)$}{Halpha}}\label{sec:cp}

In this section we will expand the determinant of $H^\alpha$ and study the $\alpha$\hyp{}characteristic polynomial,
in particular with respect to the three instances $H^i$, $H^{w}$ and $H^{\gamma}$. 

A classic result from linear algebra, concerning determinant expansion, is the following:

\begin{theorem}\label{det}
	If $A=[a_{i,j}]$ is a square matrix of order $n$ then
	\begin{align}
	\det(A)=\sum_{\eta \in S_n} \sign(\eta) a_{1,\eta(1)}a_{2,\eta(2)}a_{3,\eta(3)}\dots a_{n,\eta(n)}.
	\end{align}
\end{theorem}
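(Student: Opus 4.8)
The plan is to derive the Leibniz expansion from the three characterizing properties of the determinant: that $\det$ is linear in each row separately ($n$-linearity), that it vanishes whenever two rows coincide (the alternating property, which forces a sign change under a row swap), and that $\det(I)=1$. If one instead takes cofactor expansion along the first row as the definition, an entirely analogous argument goes through by induction on $n$; I sketch the multilinear route here, as it is the most transparent.

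First I would write the $i$-th row of $A$ as $\sum_{j=1}^{n} a_{ij}\,\mathbf{e}_j^{\mathsf T}$, where $\mathbf{e}_1,\dots,\mathbf{e}_n$ is the standard basis of row vectors. Applying $n$-linearity once per row and collecting the resulting terms by functions $\phi\colon\{1,\dots,n\}\to\{1,\dots,n\}$ gives
\[
\det(A)=\sum_{\phi}\Bigl(\prod_{i=1}^{n} a_{i,\phi(i)}\Bigr)\det\bigl(E_\phi\bigr),
\]
where $E_\phi$ is the $0/1$ matrix whose $i$-th row is $\mathbf{e}_{\phi(i)}^{\mathsf T}$. If $\phi$ is not injective, then $E_\phi$ has two equal rows, so $\det(E_\phi)=0$ by the alternating property; hence only the bijections $\phi=\eta\in S_n$ contribute. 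For such an $\eta$, the matrix $E_\eta$ is obtained from $I$ by permuting its rows according to $\eta$, so $\det(E_\eta)=\sign(\eta)\det(I)=\sign(\eta)$. Substituting back yields the claimed identity.

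The one genuinely non-routine point is the evaluation $\det(E_\eta)=\sign(\eta)$. This rests on two facts that must be in place beforehand: that swapping two rows of a matrix negates its determinant (immediate from the alternating property, by expanding $\det$ of the matrix in which rows $i$ and $k$ are both replaced by $r_i+r_k$ and discarding the two repeated-row terms), and that the signature $\sign\colon S_n\to\{\pm1\}$ is a well-defined homomorphism, i.e.\ the parity of the number of transpositions in a factorization of $\eta$ does not depend on the factorization chosen. Granting these, one writes $\eta$ as a product of transpositions, applies the row-swap rule once per transposition starting from $I$, and reads off $\det(E_\eta)=\sign(\eta)$. In the inductive alternative, the analogous obstacle is the bookkeeping that matches the cofactor sign $(-1)^{1+j}$ against the signature of the permutation in $S_n$ sending $1\mapsto j$ and acting as a prescribed element of $S_{n-1}$ on the remaining indices; this is a routine but slightly fiddly reindexing of the sum.
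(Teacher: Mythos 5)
Your argument is correct: expanding each row over the standard basis, killing the non-injective index functions via the alternating property, and evaluating $\det(E_\eta)=\sign(\eta)$ through the row-swap rule is the standard derivation of the Leibniz formula from the axiomatic characterization of the determinant, and you rightly flag the well-definedness of $\sign$ as the only point requiring prior groundwork. Note that the paper itself offers no proof of this statement at all --- it is quoted as a classic fact from linear algebra and used only as a black box for the determinant expansion of $H^\alpha$ --- so there is no authorial argument to compare against; your write-up simply supplies the textbook proof, and it does so soundly.
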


Decades ago, the above \namecref{det} has been applied to adjacency matrices of graphs. The permutations over which the sum ranges
can be put into correspondence with certain subgraphs of the given graph. To this end, we define the following terms and notation:

\begin{definition}
	Let $D$ be a mixed graph.
	\begin{enumerate}
		\item $D$ is called elementary if, for every component $C$ of $D$, $\Gamma(C)$ is either isomorphic to $P_2$ or $C_k$ (for some $k\geq 3$).
  	\item Let $D$ be elementary. The rank of $D$ is defined as $r(D)=n-c$, where $n=\vert V(D)\vert $ and $c$ is the number of its components.
		The co-rank of $D$ is defined as $s(D) = m-r(D)$, where $m=\vert E(D)\vert$. 
	\end{enumerate}
\end{definition}

Note that the co-rank $s(D)$ is equal to the number of $C_k$ components of $D$.

Now we are ready to state the following classic theorem by Harary (cf.\ \cite{HA}):

\begin{theorem}[Determinant expansion (Harary, 1962)]\label{origha}
Let $D$ be a graph with adjacency matrix $A(G)$. Then,
\begin{align}
\det(A(G)) = \sum_S (-1)^{r(S)} 2^{s(S)},
\end{align}
where the sum ranges over all spanning elementary subgraphs $S$ of $G$.
\end{theorem}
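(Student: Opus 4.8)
The plan is to apply \Cref{det} to $A=A(G)$ and interpret each nonzero term combinatorially. Since $a_{i,j}\in\{0,1\}$ for the (traditional) adjacency matrix, a permutation $\eta\in S_n$ contributes a nonzero summand $\sign(\eta)\,a_{1,\eta(1)}\cdots a_{n,\eta(n)}$ if and only if $a_{i,\eta(i)}=1$ for every $i$, i.e.\ $v_i v_{\eta(i)}\in E(G)$ for all $i$. The first step is to record what such an $\eta$ encodes: decompose $\eta$ into disjoint cycles; a fixed point would force a loop $v_iv_i$, which is excluded, so every cycle of $\eta$ has length at least $2$. A $2$-cycle $(i\ j)$ requires the single edge $v_iv_j$ (traversed both ways), contributing a $P_2$; a cycle $(i_1\ i_2\ \cdots\ i_k)$ with $k\ge 3$ requires the edges $v_{i_1}v_{i_2},\dots,v_{i_k}v_{i_1}$, i.e.\ a copy of $C_k$ in $G$. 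Hence the vertex sets of the cycles of $\eta$ partition $V(G)$ and the corresponding edges form a spanning subgraph $S$ whose every component is a $P_2$ or a $C_k$ — precisely a spanning elementary subgraph in the sense just defined.

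The second step is to go the other way and count multiplicities: given a spanning elementary subgraph $S$, how many $\eta$ produce exactly the edge set of $S$? A $P_2$-component fixes its $2$-cycle uniquely. A $C_k$-component, $k\ge 3$, as an undirected cycle can be traversed in two directions, so it corresponds to exactly two $k$-cycles of $\eta$ (the cyclic permutation and its inverse). Therefore the number of permutations $\eta$ realizing $S$ is $2^{s(S)}$, where $s(S)$ is the number of $C_k$-components — matching the remark after the definition that $s(D)$ equals the number of cycle components.

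The third step is the sign bookkeeping. For a permutation that is a product of disjoint cycles of lengths $\ell_1,\dots,\ell_c$ on $n$ vertices, $\sign(\eta)=\prod_j(-1)^{\ell_j-1}=(-1)^{\sum_j(\ell_j-1)}=(-1)^{n-c}=(-1)^{r(S)}$, using $r(S)=n-c$ from the definition of rank of an elementary graph; note $c$ is the number of components of $S$ and this does not depend on which of the two orientations each cycle component was given, so both $\eta$ realizing a given $C_k$ contribute the same sign. Collecting terms: each spanning elementary subgraph $S$ contributes $2^{s(S)}$ permutations, each of sign $(-1)^{r(S)}$, giving a total contribution $(-1)^{r(S)}2^{s(S)}$; summing over all such $S$ yields the claimed formula, and all other permutations contribute $0$.

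The only genuinely delicate point — and the one I would write out carefully rather than wave at — is the bijection in step two between orientations of the cycle components and the cycles of $\eta$, i.e.\ verifying that a $C_k$ component is realized by exactly two permutation cycles (no more, from the rigidity of following edges; no fewer, since $k\ge 3$ makes the two directions distinct) and that $P_2$ is realized by exactly one. Everything else is the standard cycle-decomposition accounting. Since the statement as given concerns an ordinary graph $G$, no $\alpha$ or orientation signs enter here; this theorem is the scalar prototype for the $\alpha$-weighted expansion of $\det H^\alpha(D)$ to be developed in the sequel.
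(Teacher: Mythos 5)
Your proof is correct and is precisely the classic cycle-decomposition argument (Leibniz expansion, permutations without fixed points decomposed into transpositions and longer cycles, the factor $2^{s(S)}$ from the two traversal directions of each cycle component, and the sign $(-1)^{n-c}=(-1)^{r(S)}$); the paper does not reprove this result but cites it as Harary's theorem with the proof in Biggs, and your argument is exactly that standard proof, with the delicate multiplicity count handled correctly.
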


Following the classic proof strategy used in \Cref{origha}, the result readily generalizes to any $\alpha$\hyp{}Hermitian adjacency matrix.
But first we require the following definition:

\begin{definition}\label{halphw}
	Let $D$ be a mixed graph and $H^\alpha(D)=[h_{uv}]$. With respect to this, 
  the value $h_{\alpha}(W)$ of a mixed walk $W$ with vertices $v_1,v_2,\ldots,v_k$ is defined as
		\begin{align}
		h_{\alpha}(W) = (h_{v_1v_2}h_{v_2v_3}h_{v_3v_4}\cdots h_{v_{k-1}v_k}) \in\{\alpha^r\}_{r\in\mathbb{Z}}.
		\end{align}
\end{definition}

\begin{theorem}[Determinant expansion for $H^\alpha$]\label{ourha} Let $D$ be a mixed graph. Then
	\begin{align}
	\det(H^\alpha) = \sum_{D'} (-1)^{r(D')} \ 2^{s(D')}  \Re\left(\prod_{C} h_{\alpha}(\vec{C})\right),
	\end{align}
	where the sum ranges over all spanning elementary mixed subgraphs $D'$ of $D$,
	the product ranges over all mixed cycles $C$ in $D'$, and $\vec{C}$ is any closed walk traversing $C$.
\end{theorem}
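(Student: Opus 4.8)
The plan is to follow the classical Harary argument, now bookkeeping the complex weights $h_{uv}$ rather than just $\pm 1$. First I would start from \Cref{det} and group the permutations $\eta\in S_n$ by their cycle decomposition. A permutation $\eta$ contributes a nonzero term only if $h_{u,\eta(u)}\neq 0$ for every $u$, i.e.\ only if every nontrivial cycle of $\eta$ traces out a closed mixed walk in $D$ and every fixed point corresponds to a (nonexistent, since $h_{uu}=0$) loop --- so in fact $\eta$ must be fixed-point-free and each of its cycles has length $\geq 2$. A $2$-cycle $(u\ v)$ of $\eta$ contributes $h_{uv}h_{vu}=|h_{uv}|^2=1$ and forces an edge $uv$ of $\Gamma(D)$ (a digon, an arc, or a digon-as-$P_2$); a cycle of length $k\geq 3$ contributes $h_{v_1v_2}h_{v_2v_3}\cdots h_{v_kv_1}=h_\alpha(\vec C)$ where $C$ is the corresponding mixed $C_k$ and $\vec C$ the closed walk in the direction dictated by $\eta$. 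Thus the nonzero permutations are in bijection with pairs (spanning elementary mixed subgraph $D'$ of $D$, choice of traversal direction for each $C_k$-component of $D'$), with the $P_2$-components covered by $2$-cycles.

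Next I would assemble the sign. For a fixed-point-free permutation whose cycles have lengths $\ell_1,\dots,\ell_c$, we have $\sign(\eta)=\prod_j(-1)^{\ell_j-1}=(-1)^{\sum_j(\ell_j-1)}=(-1)^{n-c}=(-1)^{r(D')}$, which is exactly the rank exponent; note this depends only on $D'$, not on the orientation choices. The factor $2^{s(D')}$ then arises by counting traversal choices: each $C_k$-component ($k\geq 3$) can be traversed in two directions, and the two directions give the two terms $h_\alpha(\vec C)$ and $h_\alpha(\cev C)=\overline{h_\alpha(\vec C)}$ (since reversing a walk conjugates each $h_{uv}$), so summing the two corresponding permutation terms over a single $C_k$ yields $h_\alpha(\vec C)+\overline{h_\alpha(\vec C)}=2\Re(h_\alpha(\vec C))$. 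A $P_2$-component contributes the scalar $1$ and has no orientation choice. Collecting the contributions multiplicatively over the components of $D'$ (the cycle structure of $\eta$ splits as a product over components, and the determinant term factors accordingly), the block coming from $D'$ equals $(-1)^{r(D')}\,\prod_{C}\bigl(h_\alpha(\vec C)+\overline{h_\alpha(\vec C)}\bigr)=(-1)^{r(D')}\,2^{s(D')}\,\Re\!\bigl(\prod_C h_\alpha(\vec C)\bigr)$, where the last equality uses that a product of the $2\Re(\cdot)$ factors over all cycle components of $D'$, re-expanded, has real part equal to $2^{s(D')}\Re(\prod_C h_\alpha(\vec C))$.

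The one genuinely delicate point --- and the step I expect to require the most care --- is precisely that last rewriting: why does $\sum$ over all $2^{s(D')}$ sign/direction choices collapse to $2^{s(D')}\Re(\prod_C h_\alpha(\vec C))$ rather than to $\prod_C 2\Re(h_\alpha(\vec C))$, which is a different number in general? The resolution is that one must sum over \emph{all} spanning elementary subgraphs simultaneously, or argue directly on $\Re$: since $\det(H^\alpha)$ is real (as $H^\alpha$ is Hermitian), the total sum equals its own real part, and within a fixed $D'$ the orientation choices pair up each $\vec C$ with $\cev C$ only at the level of the full product $\prod_C h_\alpha(\vec C{}_{\,j})$ over the joint choice; taking $\Re$ of the sum of these $2^{s(D')}$ conjugate-pair-generated products gives $2^{s(D')}\Re(\prod_C h_\alpha(\vec C))$ by induction on the number of cycle components (each new cycle contributes a factor that, after taking real parts of a sum of a complex number and its conjugate, multiplies the count by $2$ while leaving $\Re(\prod)$ intact up to a standard identity $\Re(z(w+\bar w))+\ldots$, handled cleanly by writing everything as $\tfrac12(z+\bar z)$ type sums). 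I would phrase this via the identity $\sum_{\varepsilon\in\{1,\ast\}^{s}}\prod_j z_j^{\varepsilon_j}=\prod_j(z_j+\bar z_j)$ and then observe that the real part of the left side is what we want; alternatively, and perhaps more transparently for the write-up, simply define $\vec C$ to be an arbitrary but fixed traversal of each $C$, note the permutation terms come in conjugate pairs swapping all traversals at once is \emph{not} what happens --- rather each cycle flips independently --- and conclude by the observation that $\det(H^\alpha)\in\mathbb R$ forces the imaginary contributions to cancel across the whole sum, leaving the stated real-part formula. I would double-check the edge cases $k=2$ (digon vs.\ single arc both give weight $1$) and make explicit that "mixed cycle" in the statement means a cycle $C_k$ with $k\geq 3$, so that $P_2$-components contribute the empty product $1$.
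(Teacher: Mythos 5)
Your execution of the classical permutation--expansion argument is correct and complete up to, and including, the identity $\sum_{\varepsilon\in\{1,\ast\}^{s}}\prod_j z_j^{\varepsilon_j}=\prod_j(z_j+\bar z_j)$: the nonvanishing terms of $\det(H^\alpha)$ do correspond to spanning elementary mixed subgraphs $D'$ together with an independent choice of traversal direction for each cycle component, the sign is $(-1)^{r(D')}$, reversal conjugates a cycle weight, and hence the total contribution of a fixed $D'$ is $(-1)^{r(D')}\prod_{C}\bigl(h_\alpha(\vec C)+\overline{h_\alpha(\vec C)}\bigr)=(-1)^{r(D')}\,2^{s(D')}\prod_{C}\Re\bigl(h_\alpha(\vec C)\bigr)$. (This is also all that the paper's one-line proof, which simply defers to the classical argument in Biggs, amounts to.) The gap is exactly where you suspected it: the passage from $\prod_{C}\Re\bigl(h_\alpha(\vec C)\bigr)$ to $\Re\bigl(\prod_{C} h_\alpha(\vec C)\bigr)$ cannot be made, and the appeal to the realness of $\det(H^\alpha)$ is vacuous, because $\prod_C(z+\bar z)$ is already real term by term; realness of the total cannot select between two real expressions that disagree. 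The two quantities genuinely differ as soon as some $D'$ has two or more cycle components with non-real weights, and neither of your proposed resolutions (summing over all $D'$ simultaneously, or an induction on the number of cycle components) changes this.

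Concretely, let $D$ be the disjoint union of two cyclically oriented triangles and $\alpha=i$. Each block of $H^i$ has determinant $(-1)^2\cdot 2\cdot\Re(i^3)=0$, so $\det(H^i)=0$; this matches $(-1)^4\,2^2\,\Re(-i)\,\Re(-i)=0$, whereas $(-1)^4\,2^2\,\Re\bigl((-i)(-i)\bigr)=-4$. So the statement as printed is false, and what your computation actually proves is $\det(H^\alpha)=\sum_{D'}(-1)^{r(D')}\,2^{s(D')}\prod_{C}\Re\bigl(h_\alpha(\vec C)\bigr)$ (equivalently, a sum over pairs consisting of $D'$ and an orientation of each of its cycles, with weight $\prod_C h_\alpha(\vec C)$ and no factor $2^{s}$). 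A quick sanity check confirming this is the correct form: the right-hand side must be multiplicative over connected components of $D$ because the determinant of a block-diagonal matrix is, and $\prod_C\Re(\cdot)$ is multiplicative over components while $\Re\bigl(\prod_C(\cdot)\bigr)$ is not. Your instinct that this was ``the one genuinely delicate point'' was right; the resolution is to correct the statement rather than to force the computation into it. Note that the same transposition of $\Re$ and $\prod$ propagates to \Cref{char}; the applications later in the paper happen to involve only elementary subgraphs with at most one cycle, where the two forms coincide.
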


\begin{myproof}
Consider the matrix $H^\alpha$ and apply the classic proof strategy for determinant expansion on graphs, cf.\ the proof of \Cref{origha} in \cite{Biggs}.
\end{myproof}

Considering specific values of $\alpha$, the formula in \Cref{ourha} becomes more specific, too. For  $\alpha=i$
we may rediscover a result given in \cite{Liu}.
Moreover, \Cref{ourha} immediately allows us to compute the $\alpha$\hyp{}characteristic polynomial:

\begin{corollary}\label{char}
	If $\chi_\alpha(D,\lambda) = \lambda^n+ c_1\lambda^{n-1}+c_2\lambda^{n-2}+\dots+c_n$ is the $\alpha$\hyp{}characteristic polynomial of a mixed graph $D$, then 
	\begin{align}\label{eq:ck}
	(-1)^k{c_k} = \sum (-1)^{r(D')} \ 2^{s(D')} \Re\left(\prod_{C} h_\alpha(\vec{C})\right),
	\end{align}
	where the sum ranges over all elementary mixed subgraphs $D'$ with $k$ vertices,
	the product ranges over all mixed cycles $C$ in $D'$, and $\vec{C}$ is any closed walk traversing $C$.
\end{corollary}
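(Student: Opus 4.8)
The plan is to derive \Cref{char} directly from \Cref{ourha} by applying the determinant-expansion formula not to $H^\alpha(D)$ itself but to the principal submatrices that govern the coefficients of the characteristic polynomial. Recall the standard fact from linear algebra that if $\chi(D,\lambda)=\det(\lambda I - H^\alpha) = \lambda^n + c_1\lambda^{n-1}+\dots+c_n$, then
\begin{align}
c_k = (-1)^k \sum_{\substack{B\subseteq V(D)\\ |B|=k}} \det\bigl(H^\alpha[B]\bigr),
\end{align}
where $H^\alpha[B]$ denotes the principal submatrix of $H^\alpha$ indexed by the vertex subset $B$. First I would state and justify this identity (it follows by expanding $\det(\lambda I - H^\alpha)$ and collecting, by multilinearity, the terms in which exactly $n-k$ diagonal entries $\lambda$ are chosen), so that $(-1)^k c_k = \sum_{|B|=k}\det(H^\alpha[B])$.

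Next I would observe that for each fixed $B$ with $|B|=k$, the matrix $H^\alpha[B]$ is precisely the $\alpha$\hyp{}Hermitian adjacency matrix of the induced mixed subgraph $D[B]$. Hence \Cref{ourha} applies verbatim to $H^\alpha[B]$, giving
\begin{align}
\det\bigl(H^\alpha[B]\bigr) = \sum_{D'} (-1)^{r(D')}\, 2^{s(D')}\, \Re\!\left(\prod_{C} h_\alpha(\vec{C})\right),
\end{align}
where the inner sum ranges over all spanning elementary mixed subgraphs $D'$ of $D[B]$. Summing this over all $B$ with $|B|=k$, I would argue that the double sum simply reorganizes into a single sum over all elementary mixed subgraphs $D'$ of $D$ on exactly $k$ vertices: each such $D'$ has a uniquely determined vertex set $B=V(D')$ of size $k$, it is a spanning elementary subgraph of $D[B]$, and conversely every spanning elementary subgraph of some $D[B]$ with $|B|=k$ is an elementary mixed subgraph of $D$ on $k$ vertices. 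This bijection between the index sets of the two sums is the crux of the argument, and it is essentially bookkeeping rather than a genuine obstacle — the only point requiring a little care is to confirm that ``elementary on $k$ vertices'' and ``spanning elementary subgraph of the induced subgraph on its own vertex set'' describe the same objects, which is immediate from the definition of elementary (every component is $P_2$ or some $C_\ell$, a condition insensitive to the ambient graph).

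The mildest technical point, and the one I would be most careful about, is the matching of signs and of the cycle-factor term: one must check that the $(-1)^k$ arising from the characteristic-polynomial identity is exactly absorbed, so that the right-hand side of \eqref{eq:ck} carries $(-1)^{r(D')}2^{s(D')}\Re(\prod_C h_\alpha(\vec C))$ with no extra sign, and that the factor $\Re(\prod_C h_\alpha(\vec C))$ transfers unchanged since $h_\alpha(\vec C)$ depends only on the edges of $C$ inside $D'$ and is unaffected by whether we regard $C$ as sitting in $D'$, in $D[B]$, or in $D$. Given \Cref{ourha}, no new combinatorial or algebraic idea is needed; the proof is a routine specialization, and I would present it in just a few lines.
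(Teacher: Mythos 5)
Your proposal is correct and follows exactly the paper's route: the paper's own one-line proof invokes the identity that $(-1)^k c_k$ is the sum of all $k\times k$ principal minors of $H^\alpha(D)$ and then applies \Cref{ourha} to each induced submatrix, which is precisely the argument you spell out in more detail. The extra care you take with the bijection between spanning elementary subgraphs of the $D[B]$ and elementary subgraphs of $D$ on $k$ vertices is sound and merely makes explicit what the paper leaves implicit.
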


\begin{myproof}
	This follows immediately from the fact that $(-1)^k c_k$ equals the sum of all principal minors of $H^\alpha(D)$ with $k$ rows and columns.
\end{myproof}

\section{\bf Cospectrality}

A recurring theme in algebraic graph theory is the hunt for pairs of non-isomorphic graphs
having the same spectrum. Such graphs are called cospectral. 
In contrast to this, we shall look into the question under which conditions the
same graph has identical $\alpha$\hyp{}spectrum for different values of $\alpha$.
It comes as no surprise that such spectra may be completely different:

\begin{example}
For the mixed graph shown in \Cref{fig:ncospec} we have:
\begin{align*}
\sigma_{\gamma} & = \{2.57083, -2.3222, 1.50413, -1.19239, -0.560369\}\\ 
\sigma_{\omega} & = \{-2.93033, 2.30034, 1.15439, -0.832963, 0.308565\}\\ 
\sigma_{i}      & = \{-2.71687, 2.2803, 1.50739, -1.07082, 0.0\} 
\end{align*}
\end{example}

\begin{figure}
	\centering
		\includegraphics[scale=0.6]{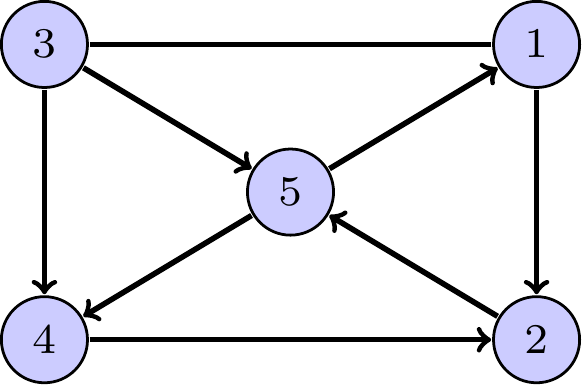}
	\caption{A mixed graph where $\sigma_{\gamma}$, $\sigma_{\omega}$, $\sigma_{i}$ are different from one another}
	\label{fig:ncospec}
\end{figure}

However, there exist mixed graphs  exhibiting the same $\alpha$\hyp{}spectrum for two different values of $\alpha$, say $\alpha_1, \alpha_2$. We call such a mixed graph
$\alpha_1$-$\alpha_2$\hyp{}cospectral. Let us give an example for a $\gamma$-$\omega$\hyp{}cospectral mixed graph:

\begin{example}\label{ex:ga_om_cospec}
The mixed graph $D$ shown in \Cref{fig:ex4} is $\gamma$-$\omega$\hyp{}cospectral, i.e.\ $\sigma_{\gamma}(D)=\sigma_{\omega}(D)$.
This is not difficult to see: With respect to \Cref{char} note that $D$ contains only one cycle. Moreover, $h_\gamma(c) \in \{\gamma, \gamma^2\}$ and 
$h_\omega(c)\in \{ \omega^2, \overline{\omega^2} \}$. Observing $\gamma=\omega^2$ we have $\chi_\gamma(D,\lambda)=\chi_\omega(D,\lambda)$.
In contrast, we remark that $\chi_\alpha(D,\lambda) \ne \chi_i(D,\lambda)$, hence $\sigma_\alpha(D)\not= \sigma_i(D)$.
\end{example}

\begin{figure*}
		\begin{center}
		\includegraphics[scale=0.6]{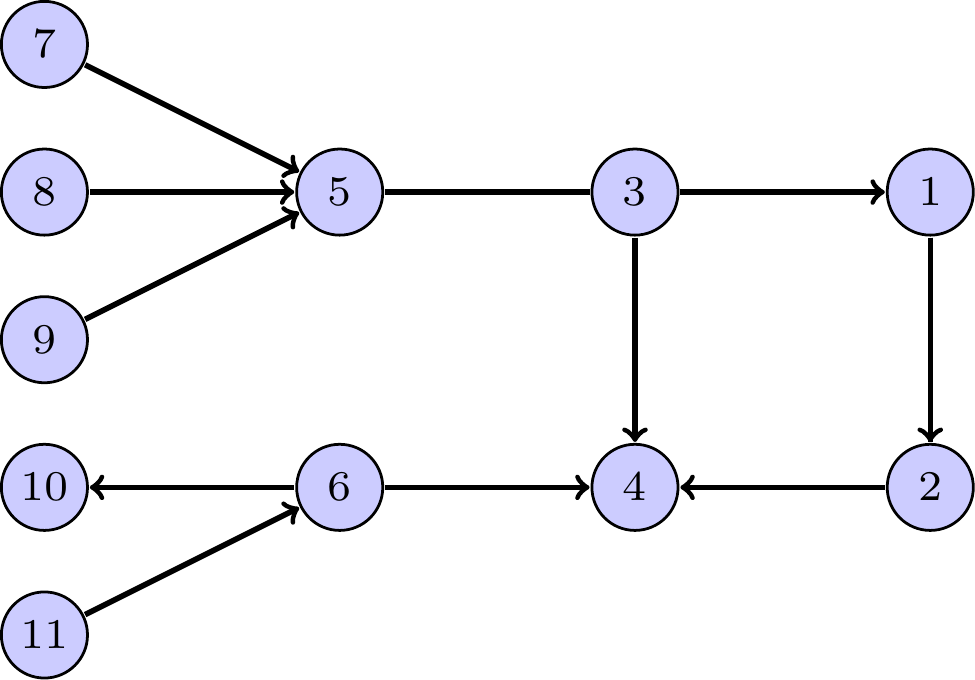}
			\caption{A $\gamma$-$\omega$\hyp{}cospectral mixed graph} 
			\label{fig:ex4}
		\end{center}
\end{figure*}

Note that for $\alpha=1$ we have $H^\alpha=A(\Gamma(D))$, so the special case of $\alpha$-$1$\hyp{}cospectrality is equivalent to asking whether the
$\alpha$\hyp{}spectrum of a mixed graph $D$ coincides with the traditional spectrum of its undirected counterpart $\Gamma(D)$.
Thus \Cref{char} immediately gives rise to the following result:

\begin{corollary}\label{tree_underly}
Let $T$ be a mixed tree. Then $\sigma_\alpha(T)=\sigma(\Gamma(T))$.
\end{corollary}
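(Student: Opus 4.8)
The plan is to invoke \Cref{char} and argue that, for a mixed tree, the only elementary subgraphs that contribute to the coefficients are those whose components are all copies of $P_2$; since such subgraphs contain no cycles, the factor $\Re\left(\prod_C h_\alpha(\vec{C})\right)$ is an empty product equal to $1$, and hence every coefficient $c_k$ is independent of $\alpha$. In particular, setting $\alpha = 1$ shows that $\chi_\alpha(T,\lambda) = \chi_1(T,\lambda) = \det(\lambda I - A(\Gamma(T)))$, which gives $\sigma_\alpha(T) = \sigma(\Gamma(T))$.

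More concretely, I would proceed as follows. First, recall that an elementary mixed subgraph $D'$ of $T$ has every component isomorphic (via $\Gamma$) to either $P_2$ or some cycle $C_k$ with $k \ge 3$. Second, observe that since $\Gamma(T)$ is a tree, it is acyclic, so $\Gamma(D')$ is also acyclic as a subgraph of a tree; therefore no component of $D'$ can be a cycle, and every component must be a copy of $P_2$. Third, for such a subgraph the product over mixed cycles $C$ in $D'$ is empty, hence equals $1$, so $\Re\left(\prod_C h_\alpha(\vec{C})\right) = 1$ regardless of $\alpha$. Consequently, the right-hand side of \eqref{eq:ck} reduces to $\sum (-1)^{r(D')} 2^{s(D')} = \sum (-1)^{r(D')}$ (as $s(D') = 0$ for all contributing $D'$), a quantity that does not depend on $\alpha$.

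Finally, since each coefficient $c_k$ is independent of $\alpha$, the entire $\alpha$\hyp{}characteristic polynomial $\chi_\alpha(T,\lambda)$ is independent of $\alpha$. Taking $\alpha = 1$ and using the remark that $H^1(T) = A(\Gamma(T))$, we conclude $\chi_\alpha(T,\lambda) = \chi_1(T,\lambda) = \chi(\Gamma(T),\lambda)$, and therefore $\sigma_\alpha(T) = \sigma(\Gamma(T))$.

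There is no real obstacle here; the argument is essentially a bookkeeping observation about which terms in the determinant expansion survive. The only point that warrants a sentence of care is the claim that a subgraph of a tree cannot contain a cycle — this is immediate since trees are acyclic and any subgraph of an acyclic graph is acyclic — together with the fact that in an elementary graph the non-$P_2$ components are exactly the cycles. Once those are noted, the conclusion follows directly from \Cref{char}.
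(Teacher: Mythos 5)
Your proposal is correct and follows exactly the paper's argument: the paper's one-line proof likewise observes that a tree contains no cycles, so the cycle products in \eqref{eq:ck} are empty and the coefficients coincide with those for $\alpha=1$, i.e.\ with those of $A(\Gamma(T))$. Your version merely spells out the bookkeeping in more detail; there is no substantive difference.
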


\begin{proof}
Trees do not contain cycles, hence using $\alpha=1$ in \eqref{eq:ck} instead of the given value does not change the result.
\end{proof}

Since trees are $\alpha$-$1$\hyp{}cospectral for any $\alpha$ we see that they are 
$\alpha_1$-$\alpha_2$\hyp{}cospectral for arbitrary values $\alpha_1$,$\alpha_2$.
Now, consider a mixed graph that contains cycles. Obviously, it does not matter for equation \eqref{eq:ck} if we use
$\alpha=1$ or some other specific value as long as (with respect to that other value) all factors in the involved products are equal to one.
This motivates the following definition:

\begin{definition}\label{def:mono1}
A mixed graph is an $\alpha$\hyp{}monograph (of \kindone kind) if $h_\alpha(\vec{C})=1$ for all its cycles $C$.
\end{definition}

Trivially, trees are $\alpha$\hyp{}monographs. By construction, \Cref{tree_underly} directly extends to monographs:

\begin{theorem}\label{1stspecf}
Let $D$ be an $\alpha$\hyp{}monograph (of \kindone kind). Then, $\sigma_\alpha(D)=\sigma(\Gamma(D))$.
\end{theorem}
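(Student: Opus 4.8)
The plan is to match the $\alpha$\hyp{}characteristic polynomial of $D$ with the ordinary characteristic polynomial of $\Gamma(D)$ coefficient by coefficient, exploiting \Cref{char}. Recall from the remark preceding \Cref{tree_underly} that $\alpha=1$ gives $H^1(D)=A(\Gamma(D))$ and hence $\chi_1(D,\lambda)=\chi(\Gamma(D),\lambda)$; so it suffices to prove $\chi_\alpha(D,\lambda)=\chi_1(D,\lambda)$, and then read off the equality of spectra from the equality of polynomials.

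First I would observe that the combinatorial data feeding into \eqref{eq:ck} — the spanning elementary mixed subgraphs $D'$ on a prescribed set of $k$ vertices together with their ranks $r(D')$ and co-ranks $s(D')$ — depend only on the underlying graph $\Gamma(D)$, not on $\alpha$: choosing such a $D'$ merely means selecting a subset of the edges of $D$, whose orientations are inherited, and $r,s$ are determined by the underlying graph structure of $D'$. The only $\alpha$\hyp{}dependent ingredient is the factor $\Re\bigl(\prod_{C} h_\alpha(\vec C)\bigr)$, the product running over the mixed cycles $C$ of $D'$. Now comes the key point: every mixed cycle $C$ occurring as a component of a spanning elementary subgraph $D'$ of $D$ is in particular a cycle of $D$, with the same edges and orientations, so $h_\alpha(\vec C)$ computed in $D'$ agrees with its value in $D$. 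Since $D$ is an $\alpha$\hyp{}monograph of \kindone kind, this value is $1$ for every such $C$ (and it does not matter which closed walk $\vec C$ traverses $C$, as $1=\bar 1$). Hence $\prod_{C} h_\alpha(\vec C)=1$ and $\Re\bigl(\prod_{C} h_\alpha(\vec C)\bigr)=1$ for every term in \eqref{eq:ck}. Exactly the same holds for the parameter $1$, because then every nonzero entry of $H^1(D)$ equals $1$, so $h_1(\vec C)=1$ for every cycle. Therefore the right\hyp{}hand side of \eqref{eq:ck} for $\alpha$ coincides term by term with the one for $1$, giving $(-1)^k c_k$ the same value in both cases for all $k$; thus $\chi_\alpha(D,\lambda)=\chi_1(D,\lambda)=\chi(\Gamma(D),\lambda)$, and $\sigma_\alpha(D)=\sigma(\Gamma(D))$.

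I do not expect a real obstacle here. The only step needing a moment's care is the bookkeeping that the elementary\hyp{}subgraph sum is literally indexed by the same set (with matching rank and co-rank) whether one works with $D$ or with $\Gamma(D)$; once that is in place, the monograph hypothesis collapses every cycle contribution to $1$, precisely as the undirected case does automatically. In essence this is \Cref{tree_underly} with the cycle terms reinstated — and those were the only terms that could have separated $\chi_\alpha$ from $\chi_1$, now neutralised by \Cref{def:mono1}.
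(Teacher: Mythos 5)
Your proposal is correct and follows exactly the argument the paper intends: the theorem is presented as a direct extension of \Cref{tree_underly}, justified by the remark that in \eqref{eq:ck} the only $\alpha$\hyp{}dependence sits in the cycle factors $h_\alpha(\vec C)$, all of which equal $1$ by \Cref{def:mono1}, so $\chi_\alpha(D,\lambda)=\chi_1(D,\lambda)=\chi(\Gamma(D),\lambda)$. Your added bookkeeping (that the elementary subgraphs, ranks and co-ranks depend only on $\Gamma(D)$) just makes explicit what the paper leaves implicit.
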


Regarding \Cref{char} and \eqref{eq:ck}, note that
$
   h_\alpha(\vec{C})=\alpha^x\bar\alpha^y=\alpha^{x-y},
$	
where $x$ (resp.\ $y$) is the number of forward (resp.\ backward) edges encounterd while traversing $\vec{C}$.
We will tacitly make use of this fact hereafter.

\begin{corollary}\label{backforthorder}
Let $D$ be a connected mixed graph. If, for every cycle in $D$, the difference between the
numbers of encountered forward arcs and the number of backward arcs (w.r.t.\ any traversal direction)
is a multiple of the order of $\alpha$, then $D$ is an $\alpha$\hyp{}monograph.
\end{corollary}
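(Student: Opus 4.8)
The plan is to show directly that under the stated hypothesis every cycle $C$ in $D$ satisfies $h_\alpha(\vec C)=1$, which is exactly the defining condition of an $\alpha$\hyp{}monograph (of \kindone kind) in \Cref{def:mono1}; the conclusion then follows by definition. So the whole argument reduces to a short computation using the remark recorded just before the statement, namely that for any closed walk $\vec C$ traversing a cycle $C$ we have $h_\alpha(\vec C)=\alpha^{x-y}$, where $x$ and $y$ count the forward and backward arcs encountered along $\vec C$ (digons contribute a factor $1$ and so are irrelevant).

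First I would fix an arbitrary cycle $C$ of $D$ and choose a traversal direction, obtaining a closed walk $\vec C$ with forward-arc count $x$ and backward-arc count $y$. By hypothesis $x-y$ is a multiple of the order of $\alpha$; write $x-y=k\cdot\mathrm{ord}(\alpha)$ for some $k\in\mathbb Z$. Then
\[
h_\alpha(\vec C)=\alpha^{x-y}=\alpha^{k\cdot\mathrm{ord}(\alpha)}=\bigl(\alpha^{\mathrm{ord}(\alpha)}\bigr)^{k}=1^{k}=1,
\]
using that $\alpha^{\mathrm{ord}(\alpha)}=1$ by definition of the (multiplicative) order of $\alpha$. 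Since $C$ was an arbitrary cycle, $h_\alpha(\vec C)=1$ holds for all cycles of $D$, so $D$ is an $\alpha$\hyp{}monograph of \kindone kind.

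There is essentially no obstacle here; this is a routine unwinding of definitions. The only point worth a sentence of care is that the quantity $h_\alpha(\vec C)$ is independent of the chosen starting vertex of the closed walk and that reversing the traversal direction merely swaps $x$ and $y$, replacing $h_\alpha(\vec C)$ by its conjugate $\alpha^{y-x}$; since $y-x$ is also a multiple of $\mathrm{ord}(\alpha)$, this too equals $1$, so the property is well defined regardless of how $C$ is traversed. One should also note implicitly that "the order of $\alpha$" presupposes $\alpha$ is a root of unity; if $\alpha$ has infinite order the hypothesis forces $x-y=0$ on every cycle, and the same computation gives $h_\alpha(\vec C)=\alpha^{0}=1$, so the statement remains valid in that degenerate case as well.
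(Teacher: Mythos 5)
Your proof is correct and follows exactly the route the paper intends: the corollary is stated as an immediate consequence of the identity $h_\alpha(\vec C)=\alpha^{x-y}$ recorded just before it, and your computation $\alpha^{x-y}=(\alpha^{\mathrm{ord}(\alpha)})^k=1$ together with \Cref{def:mono1} is precisely that unwinding (the paper omits the proof entirely). Your added remarks on reversal of traversal direction and on the infinite-order case are careful but not needed beyond what the paper already assumes.
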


\begin{corollary}
A connected mixed graph $G$ is an $\alpha$\hyp{}monograph for every value $\alpha$ if and only if
every cycle in $D$ contains as many forward arcs as backward arcs.
\end{corollary}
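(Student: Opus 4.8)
The plan is to read off both implications directly from the identity $h_\alpha(\vec{C}) = \alpha^{x-y}$ recorded just before \Cref{backforthorder}, where $x$ and $y$ count, respectively, the forward and backward arcs encountered while traversing the closed walk $\vec{C}$; note that digons contribute a factor $1$ and are therefore irrelevant to both $x$ and $y$, so ``as many forward arcs as backward arcs'' is exactly the statement $x=y$.

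For the ``if'' direction, assume every cycle $C$ of $D$ has equally many forward and backward arcs. This is a condition independent of the traversal direction, since reversing the traversal merely interchanges the roles of $x$ and $y$. Then $h_\alpha(\vec{C}) = \alpha^{x-y} = \alpha^{0} = 1$ for \emph{every} unit complex number $\alpha$, so $D$ satisfies \Cref{def:mono1} for every such $\alpha$; that is, $D$ is an $\alpha$-monograph for every $\alpha$.

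For the ``only if'' direction I would argue by contraposition. Suppose some cycle $C$ of $D$ has $x \ne y$, and fix a traversal with $x - y = k \ne 0$. Then $h_\alpha(\vec{C}) = \alpha^{k}$, and on the unit circle the equation $\alpha^{k} = 1$ has only the finitely many $|k|$-th roots of unity as solutions. Choosing any $\alpha$ with $\vert\alpha\vert = 1$ that is not among them (for instance any $\alpha$ that is not a root of unity) yields $h_\alpha(\vec{C}) \ne 1$, so $D$ fails to be an $\alpha$-monograph for that particular $\alpha$. Hence a mixed graph that is an $\alpha$-monograph for every $\alpha$ cannot contain a cycle with unequal numbers of forward and backward arcs.

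I do not expect a genuine obstacle here; the only points deserving a word are the traversal-direction independence of the condition $x=y$ and the observation that it suffices to exhibit a single ``bad'' value of $\alpha$ in the converse. (Connectedness in the hypothesis plays no essential role in the argument. One could also remark that, since $x-y$ is additive over the integral cycle space, it is enough to verify the condition on a cycle basis of $\Gamma(D)$, but this refinement is not needed for the statement as phrased.)
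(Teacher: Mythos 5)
Your proof is correct and follows essentially the same route as the paper: the ``if'' direction is the observation $h_\alpha(\vec{C})=\alpha^{x-y}=\alpha^0=1$ (the paper cites \Cref{backforthorder} for this), and the ``only if'' direction rests on choosing an $\alpha$ of infinite order, i.e.\ one that is not a root of unity. Your additional remarks on traversal-direction independence and the cycle-space refinement are fine but, as you note, not needed.
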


\begin{proof}
The forward implication follows directly from \Cref{backforthorder}. For the converse note that there
exist values $\alpha$ of infinite order (i.e.\ $\alpha^j = \alpha^k$ only if $j=k$).
\end{proof}

The properties of $\alpha$\hyp{}monographs deserve further investigation. 
But beforehand, we will elaborate more on cospectrality. We start with the question when the $\alpha$\hyp{}spectra for the 
special values $\gamma$ and $\omega$ coincide:

\begin{theorem}\label{gaomcospeceven}
Let $D$ be a mixed graph. If, for any cycle in $D$, the difference between the
numbers of encountered forward arcs and the number of backward arcs (w.r.t.\ any traversal direction)
is even, then $D$ is $\gamma$-$\omega$\hyp{}cospectral.
\end{theorem}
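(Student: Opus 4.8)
The plan is to reduce the claim to a comparison of the coefficient formulas in \Cref{char} for the two values $\gamma=e^{2\pi i/3}$ and $\omega=e^{\pi i/3}$. Since $\sigma_\gamma(D)=\sigma_\omega(D)$ is equivalent to $\chi_\gamma(D,\lambda)=\chi_\omega(D,\lambda)$, it suffices to show that for every $k$ the two expressions for $(-1)^k c_k$ given by \eqref{eq:ck} agree. Both sums range over the same index set (elementary mixed subgraphs $D'$ on $k$ vertices), and the factors $(-1)^{r(D')}2^{s(D')}$ do not depend on $\alpha$; hence it is enough to prove that for every mixed cycle $C$ occurring in such a $D'$ one has
\begin{align}\label{eq:reduction}
\Re\bigl(h_\gamma(\vec C)\bigr)=\Re\bigl(h_\omega(\vec C)\bigr),
\end{align}
because then the products $\prod_C h_\alpha(\vec C)$, being products of complex numbers whose real parts match, need a little more care — see below — but in fact the products themselves will be equal.

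First I would record, using the remark preceding \Cref{backforthorder}, that $h_\alpha(\vec C)=\alpha^{x-y}$ where $x,y$ are the numbers of forward and backward arcs along the traversal $\vec C$. Write $d=x-y$ for the (signed) discrepancy of the cycle; by hypothesis $d$ is even, say $d=2e$. Then $h_\gamma(\vec C)=\gamma^{2e}$ and, crucially, $h_\omega(\vec C)=\omega^{2e}=(\omega^2)^e=\gamma^e$ is not literally equal, so \eqref{eq:reduction} is the wrong normalization; what is actually true and what I would use is the identity $\omega^2=\gamma$, hence for an \emph{even} exponent $h_\omega(\vec C)=\omega^{2e}=\gamma^{e}$. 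This shows the two walk-values need not coincide cycle by cycle, so the argument must be organized at the level of the whole product $\prod_C h_\alpha(\vec C)$ over all cycles of a fixed elementary subgraph $D'$. Let $d_1,\dots,d_t$ be the discrepancies of the cycles $C_1,\dots,C_t$ of $D'$ (all even by assumption), and set $D=\sum_j d_j$. Then $\prod_j h_\gamma(\vec{C_j})=\gamma^{D}$ while $\prod_j h_\omega(\vec{C_j})=\omega^{D}$, and since each $d_j$ is even, so is $D$; writing $D=2E$ gives $\omega^{D}=\omega^{2E}=\gamma^{E}$ and $\gamma^{D}=\gamma^{2E}$. Taking real parts, $\Re(\gamma^{2E})=\cos(4\pi E/3)$ and $\Re(\gamma^{E})=\cos(2\pi E/3)$; these agree because $E\mapsto\cos(2\pi E/3)$ is even in $E$ modulo $3$ and $\cos(4\pi E/3)=\cos(2\pi E/3-2\pi E)=\cos(2\pi E/3\cdot(2))$ — more simply, $\gamma^{2E}=\overline{\gamma^{-E}}=\overline{\gamma^{E}}$ when $3\mid$ nothing is needed, because $\gamma^3=1$ gives $\gamma^{2E}=\gamma^{-E}=\overline{\gamma^{E}}$, and a complex number and its conjugate have the same real part.

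So the clean line of argument is: (i) fix an elementary spanning-type subgraph $D'$ contributing to $(-1)^kc_k$; (ii) observe all its cycle discrepancies are even, hence their sum is even, $D=2E$; (iii) compute $\prod_C h_\gamma(\vec C)=\gamma^{2E}=\gamma^{-E}=\overline{\gamma^{E}}$ and $\prod_C h_\omega(\vec C)=\omega^{2E}=\gamma^{E}$; (iv) conclude $\Re\bigl(\prod_C h_\gamma(\vec C)\bigr)=\Re\bigl(\overline{\gamma^{E}}\bigr)=\Re\bigl(\gamma^{E}\bigr)=\Re\bigl(\prod_C h_\omega(\vec C)\bigr)$; (v) since this holds termwise in the sum \eqref{eq:ck} and the remaining factors are $\alpha$-independent, $(-1)^kc_k$ is the same for $\gamma$ and $\omega$ for every $k$, so $\chi_\gamma(D,\lambda)=\chi_\omega(D,\lambda)$ and therefore $\sigma_\gamma(D)=\sigma_\omega(D)$. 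One should also note that the value $h_\alpha(\vec C)$ is, up to complex conjugation, independent of the chosen traversal direction and starting vertex of $C$, so the quantities $\Re(h_\alpha(\vec C))$ and the product over $C$ are well defined; this is exactly the well-definedness already implicit in \Cref{char}.

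The main obstacle is psychological rather than technical: the naive hope that the theorem follows from a cycle-by-cycle identity $h_\gamma(\vec C)=h_\omega(\vec C)$ is false, and one must instead exploit the algebraic relation $\omega^2=\gamma$ together with $\gamma^{2}=\overline{\gamma}=\gamma^{-1}$ at the level of the aggregated exponent $\sum_j d_j$. The parity hypothesis on each $d_j$ is used only to guarantee this sum is even so that $\omega^{\sum d_j}$ lands in $\langle\gamma\rangle$; once that is in place, the matching of real parts is immediate from $\gamma^{2E}=\overline{\gamma^{E}}$. No estimate or limiting argument is needed; the whole proof is a short manipulation inside \Cref{char}.
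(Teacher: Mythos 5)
Your argument is correct and is essentially the paper's own proof, just written out in full: the paper simply notes that under the hypothesis only even powers of $\alpha$ occur in \eqref{eq:ck} and that $\Re(\gamma^{2k})=\Re(\omega^{2k})$, which is exactly your aggregated-exponent computation $\prod_C h_\gamma(\vec C)=\gamma^{2E}=\overline{\gamma^{E}}$ versus $\prod_C h_\omega(\vec C)=\omega^{2E}=\gamma^{E}$. Your care in working at the level of the whole product rather than cycle-by-cycle is sound (and in fact the cycle-by-cycle relation $h_\gamma(\vec C)=\overline{h_\omega(\vec C)}$ would also propagate through the product by multiplicativity of conjugation), but it does not change the route.
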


\begin{proof}
Under the given assumptions there will be only even powers of $\alpha$ in \eqref{eq:ck}.
But $\Re(\gamma^{2k})=\Re(\omega^{2k})$.
\end{proof}

\begin{corollary}\label{mixbipspec}
Let $D$ be a mixed bipartite graph. Suppose that every cycle in $D$ contains an even number of digons. Then $D$ is $\gamma$-$\omega$\hyp{}cospectral.
\end{corollary}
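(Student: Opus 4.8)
The plan is to reduce this immediately to \Cref{gaomcospeceven}: it suffices to show that, under the stated hypotheses, every cycle in $D$ has the property that the difference between the number of encountered forward arcs and the number of encountered backward arcs is even. Everything then follows from the preceding theorem, with no further spectral computation needed.

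\textbf{Key steps.} First I would fix an arbitrary cycle $C$ in $D$ and let $\ell$ denote its length (number of edges). Since $D$ is bipartite, its underlying graph $\Gamma(D)$ contains no odd cycle, so $\ell$ is even. Next I would classify the $\ell$ edges of $C$ according to type with respect to a chosen traversal direction: let $d$ be the number of digons, $x$ the number of forward arcs, and $y$ the number of backward arcs, so that $d + x + y = \ell$. By hypothesis $d$ is even, and $\ell$ is even, hence $x + y = \ell - d$ is even. Finally, $x - y = (x + y) - 2y$ is a difference of even numbers, hence even. Since $C$ was arbitrary, \Cref{gaomcospeceven} applies and $D$ is $\gamma$-$\omega$\hyp{}cospectral.

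\textbf{Main obstacle.} There is essentially no obstacle here — the argument is a short parity bookkeeping on cycle edge-types, and the only facts used are that bipartiteness forbids odd cycles and that $d+x+y$ equals the (even) cycle length. The one point worth stating carefully is that the classification into forward arcs, backward arcs, and digons depends on the traversal direction, but since \Cref{gaomcospeceven} is phrased "w.r.t.\ any traversal direction" (and reversing direction swaps $x$ with $y$, leaving $x-y$ fixed up to sign and hence its parity unchanged), this causes no difficulty.
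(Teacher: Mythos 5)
Your proof is correct and takes essentially the same route as the paper: bipartiteness forces even cycle length, subtracting the (even) number of digons leaves an even number of arcs, and the parity of $x-y$ equals that of $x+y$, so \Cref{gaomcospeceven} applies directly. In fact your parity bookkeeping is stated more carefully than the paper's own, whose phrase ``an even number of forward arcs and an odd number of backward arcs -- or vice versa'' is a slip (that would make the total number of arcs odd); the correct conclusion, which you give, is that $x$ and $y$ have the same parity, so $x-y$ is even.
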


\begin{proof}
A bipartite graph contains only even cycles. Consider such a cycle $C$.
Subtracting an even number of digons, we conclude that $C$ contains an even number of arcs. Traversing $C$, these arcs consist 
of an even number of forward arcs and an odd number of backward arcs -- or vice versa. Hence \Cref{gaomcospeceven} can be applied.
\end{proof}

\begin{corollary}\label{bipspec}
Every oriented bipartite graph is $\gamma$-$\omega$\hyp{}cospectral.
\end{corollary}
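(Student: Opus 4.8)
The plan is to recognize this as an immediate specialization of \Cref{mixbipspec}. An oriented graph is, by definition, a mixed graph in which every edge is a single arc; equivalently, it contains no digons at all. Hence, for every cycle $C$ in such a graph, the number of digons on $C$ is $0$, which is trivially even. Since we are moreover assuming the underlying graph is bipartite, all hypotheses of \Cref{mixbipspec} are satisfied, and the conclusion $\sigma_\gamma(D)=\sigma_\omega(D)$ follows at once.

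Alternatively, I would run the argument directly through \Cref{gaomcospeceven} to make the proof self-contained modulo that theorem. First I would note that a bipartite underlying graph has only cycles of even length, say a cycle $C$ on $2\ell$ vertices. Since the graph is oriented, all $2\ell$ edges of $C$ are arcs, so upon fixing a traversal direction we encounter $x$ forward arcs and $y$ backward arcs with $x+y=2\ell$. Then $x-y=(x+y)-2y=2\ell-2y$ is even, so the forward/backward discrepancy on every cycle is even; \Cref{gaomcospeceven} then yields the claim.

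There is essentially no obstacle here: the only thing to get right is the bookkeeping that "oriented" forbids digons and "bipartite" forces even cycle length, after which the parity computation is trivial. I would therefore keep the proof to a single sentence citing \Cref{mixbipspec} (with the parenthetical observation that an oriented graph has no digons), rather than reproving the parity statement.

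\begin{proof}
An oriented graph contains no digons, so every cycle trivially contains an even number (namely zero) of digons. The claim now follows from \Cref{mixbipspec}.
\end{proof}
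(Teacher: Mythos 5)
Your proof is correct and matches the paper's intent exactly: the corollary is stated immediately after \Cref{mixbipspec} precisely because an oriented graph has no digons, so every cycle contains zero (an even number of) digons and \Cref{mixbipspec} applies. Your one-sentence citation of \Cref{mixbipspec} is the right level of detail, and your alternative parity computation via \Cref{gaomcospeceven} is also sound (and in fact cleaner than the slightly garbled parity bookkeeping in the paper's proof of \Cref{mixbipspec}).
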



As already mentioned in \Cref{ex:ga_om_cospec}, the graph in \Cref{fig:ex4} is $\gamma$-$\omega$\hyp{}cospectral.
This follows from \Cref{bipspec} since it is an oriented bipartite graph.

Using the ideas from the proof of \Cref{mixbipspec}, one can generalize as follows:

\begin{theorem}\label{gaomcospecevenodd}
Let $D$ be a mixed graph. 
Suppose that every even cycle in $D$ contains an even number of digons and every odd cycle in $D$ contains
an odd number of digons. Then $D$ is $\gamma$-$\omega$\hyp{}cospectral.
\end{theorem}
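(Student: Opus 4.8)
The plan is to mimic the proof of \Cref{mixbipspec} but now applied cycle-by-cycle rather than to the whole graph at once, exploiting the fact that \Cref{gaomcospeceven} only requires the forward-minus-backward arc count of \emph{each} cycle to be even. First I would recall the key identity $h_\alpha(\vec C)=\alpha^{x-y}$, where $x$ and $y$ are the numbers of forward and backward arcs met while traversing $C$; by \Cref{gaomcospeceven} it suffices to show $x-y$ is even for every cycle $C$ in $D$, since then only even powers of $\alpha$ appear in \eqref{eq:ck} and $\Re(\gamma^{2k})=\Re(\omega^{2k})$.

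Next I would fix an arbitrary cycle $C$ of length $\ell$ and let $d$ be the number of digons on $C$, so that the number of arcs on $C$ is $\ell-d=x+y$. The parity analysis splits into two cases according to the hypothesis. If $C$ is even, then $\ell$ is even and, by assumption, $d$ is even, so $x+y=\ell-d$ is even; hence $x$ and $y$ have the same parity, which forces $x-y$ to be even. If $C$ is odd, then $\ell$ is odd and, by assumption, $d$ is odd, so $x+y=\ell-d$ is again even, and the same conclusion $x-y$ even follows. In either case the hypothesis of \Cref{gaomcospeceven} is met for $C$, and since $C$ was arbitrary, \Cref{gaomcospeceven} applies to $D$, giving $\gamma$-$\omega$\hyp{}cospectrality.

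There is essentially no serious obstacle here: the whole argument is the elementary observation that $x+y$ and $x-y$ always share the same parity, combined with the parity bookkeeping $x+y=\ell-d$. The only point to state carefully is that the parity of $x-y$ (and indeed of $d$ and of $\ell$) is independent of the chosen traversal direction and starting vertex of $C$, so the phrase ``w.r.t.\ any traversal direction'' in \Cref{gaomcospeceven} is unambiguous; reversing direction swaps $x$ and $y$, changing $x-y$ only by sign, hence not its parity. I would conclude by noting that \Cref{mixbipspec} is the special case in which all cycles are even (the bipartite situation), so the present theorem is a genuine strengthening obtained at no extra cost.
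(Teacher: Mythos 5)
Your proposal is correct and is exactly the argument the paper intends: it states this theorem without proof, remarking only that it follows ``using the ideas from the proof of \Cref{mixbipspec},'' and your parity bookkeeping ($x-y\equiv x+y=\ell-d\pmod 2$, which is even in both the even-cycle and odd-cycle cases) is precisely that generalization, feeding into \Cref{gaomcospeceven}. If anything, your version is cleaner than the paper's sketch of \Cref{mixbipspec}, which misstates the parity split of the arcs (``an even number of forward arcs and an odd number of backward arcs'') where it should read that $x$ and $y$ have equal parity.
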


We have shown how \Cref{char} can be used as a tool investigating cospectrality.
In view of $\alpha_1$-$\alpha_2$\hyp{}cospectrality it is sufficient to require
that, for every elementary mixed subgraph $D'$, the real part of the associated product
in \eqref{eq:ck} is the same for both values $\alpha_1$ and $\alpha_2$.
As a slightly coarser condition one could require that $h_{\alpha_1}(\vec{C})=h_{\alpha_2}(\vec{C})$
for all cycles $C$ of $D'$. In view of this, requiring uniform cospectrality for all
values of $\alpha_1$,$\alpha_2$ (hence including value $1$) amounts to 
the condition $h_\alpha(\vec{C})=1$ stated in \Cref{def:mono1}.
In view of this, one can devise modifications which, given some $\alpha$\hyp{}monograph $D$, can be used to construct 
arbitrarily many derived $\alpha$\hyp{}monographs containing $D$ as a subgraph:

\begin{theorem}\label{extconstr}
Let $D$ be an $\alpha$\hyp{}monograph. Let $U$ be a connected undirected subgraph of $D$. Fix a set $M$ of new vertices
and subsets $V_x\subset V(U)$, for $x\in M$. Connect each vertex $x\in M$ to $V_x$ such that
either $N^+_{D}(x) = V_x$ or $N^-_{D}(x) = V_x$. Then the resulting mixed graph $\tilde D$ is  an $\alpha$\hyp{}monograph.
\end{theorem}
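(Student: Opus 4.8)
The idea is to show that attaching a new vertex $x$ to a set $V_x\subset V(U)$ — where $U$ is connected and undirected, and all new edges at $x$ point the same way — creates no new cycle with $h_\alpha(\vec C)\ne 1$, and then iterate over all of $M$. Since $D$ is already an $\alpha$-monograph, by Definition \ref{def:mono1} every cycle lying entirely in $D$ satisfies $h_\alpha(\vec C)=1$. A cycle $C$ in $\tilde D$ either avoids $M$ entirely — in which case it is a cycle of $D$ and we are done — or it passes through some of the new vertices. So the crux is to handle cycles through the new vertices.

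First I would reduce to the case $|M|=1$: the vertices of $M$ are pairwise nonadjacent (we only connect each $x\in M$ to $V_x\subseteq V(U)\subseteq V(D)$, never to another new vertex), so a cycle through two or more of them can be analyzed one new vertex at a time, and an obvious induction on $|M|$ lets us add them one by one, each time applying the single-vertex case with $D$ replaced by the monograph built so far (note $U$ remains a subgraph of the enlarged graph). So fix a single new vertex $x$ with, say, $N^+_{\tilde D}(x)=V_x$ (the case $N^-_{\tilde D}(x)=V_x$ is symmetric, replacing $\alpha$ by $\bar\alpha$ throughout). Any cycle $C$ through $x$ enters and leaves $x$ via exactly two vertices $a,b\in V_x$; traversing $C$ so that we go $a\to x\to b$, the two edges at $x$ are one backward arc ($x\gel a$, contributing $\bar\alpha$) and one forward arc ($x\ger b$, contributing $\alpha$), whose product is $\alpha\bar\alpha=1$. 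The rest of $C$ is a path $P$ from $b$ to $a$ inside $D$.

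Now I close the loop inside $D$: because $U$ is connected and $a,b\in V_x\subseteq V(U)$, there is a path $Q$ from $a$ to $b$ in $U$, and since $U$ is undirected every edge of $Q$ is a digon, so $h_\alpha(\vec Q)=1$ for the traversal from $a$ to $b$. Then $P$ followed by $Q$ is a closed walk $\vec C'$ in $D$ with $h_\alpha(\vec C')=h_\alpha(\vec P)\cdot h_\alpha(\vec Q)=h_\alpha(\vec P)$. A closed walk in an $\alpha$-monograph decomposes (as a cycle space element, or by induction on length via the argument underlying Theorem \ref{1stspecf}) into cycles of $D$, each contributing a factor $1$, so $h_\alpha(\vec C')=1$, whence $h_\alpha(\vec P)=1$. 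Therefore $h_\alpha(\vec C)=\bar\alpha\cdot h_\alpha(\vec P)\cdot\alpha=1$, as required. The main obstacle is precisely this last point — that $h_\alpha$ of an arbitrary closed walk in an $\alpha$-monograph is $1$; I would either cite that $h_\alpha$ factors multiplicatively over an edge-decomposition of a closed walk into cycles (the same fact implicitly used to pass from cycles to closed walks in Definition \ref{def:mono1} and Theorem \ref{ourha}), or prove it directly by induction on the number of edges, peeling off a chord or a repeated vertex to split the closed walk into two shorter ones. Everything else is bookkeeping about which edges at $x$ are forward versus backward.
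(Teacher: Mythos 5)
Your proposal is correct and follows essentially the same route as the paper's proof: the new vertex contributes $\alpha\bar\alpha=1$ to any cycle through it, and the remaining path is closed up by an undirected path inside $U$ (contributing $1$) so that the monograph property of $D$ forces its value to be $1$. Your organization differs only superficially (induction on $\vert M\vert$ rather than segmenting a cycle into $U$-paths, residual paths and $x$-segments all at once), and you are in fact more careful than the paper on the one real subtlety, namely that the closed-up object is a closed walk rather than a cycle, which is handled exactly by the decomposition-into-cycles argument you sketch (and which the paper itself later formalizes in \Cref{samesto} and \Cref{stchar}).
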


\begin{proof}
The newly added vertices and their adjacent edges may introduce new cycles. 
Let $C$ be such a cycle in $\tilde D$  and $\vec{C}$ any traversal of $C$.
By construction, the predecessor $v_x$ of $x$ and its successor $w_x$ (w.r.t.\ $\vec{C}$) belong to $U$. 
Consider the residual graph $R$ obtained by removing all edges of $U$ from $D$.
Every path $W$ in $R$ joining two vertices $u_1, u_2$ that originally belong to the subgraph $U$ in $D$ satisfies $h_\alpha(W)=1$. 
To see this, add any path $W'$ between $u_1$ and $u_2$ in $U$ to obtain a cycle $C_W$ in $D$. 
Choosing matching traversals for $W$ and $W'$, we get $h_\alpha(\vec{C_W})=h_\alpha(\vec{W})h_\alpha(\vec{W'})$.
Observe $h_\alpha(\vec{W'})=1$ since $U$ is an undirected subgraph of $D$.
$D$ is an $\alpha$\hyp{}monograph, so $h_\alpha(\vec{C_W})=1$ and therefore $h_\alpha(\vec{W})=1$.
$C$ can be segmented into paths of three possible types as follows: Paths within $U$, paths within $R$ and
the segment from $v_x$ via $x$ to $w_x$. The latter segment can only be $v_x\ger x \gel w_x$ or $v_x\gel x \ger w_x$.
Both contribute a factor of $1$ to the product $h_\alpha(\vec{C})$, as do the two segment types mentioned first. Overall, $h_\alpha(\vec{C})=1$.
\end{proof}

\begin{example}
\Cref{fig:exmon} illustrates the construction mentioned in \Cref{extconstr}. The vertices no.\ $12$ und $13$ have been newly added.
\end{example}

\begin{figure}
		\begin{center}
		\includegraphics[scale=0.6]{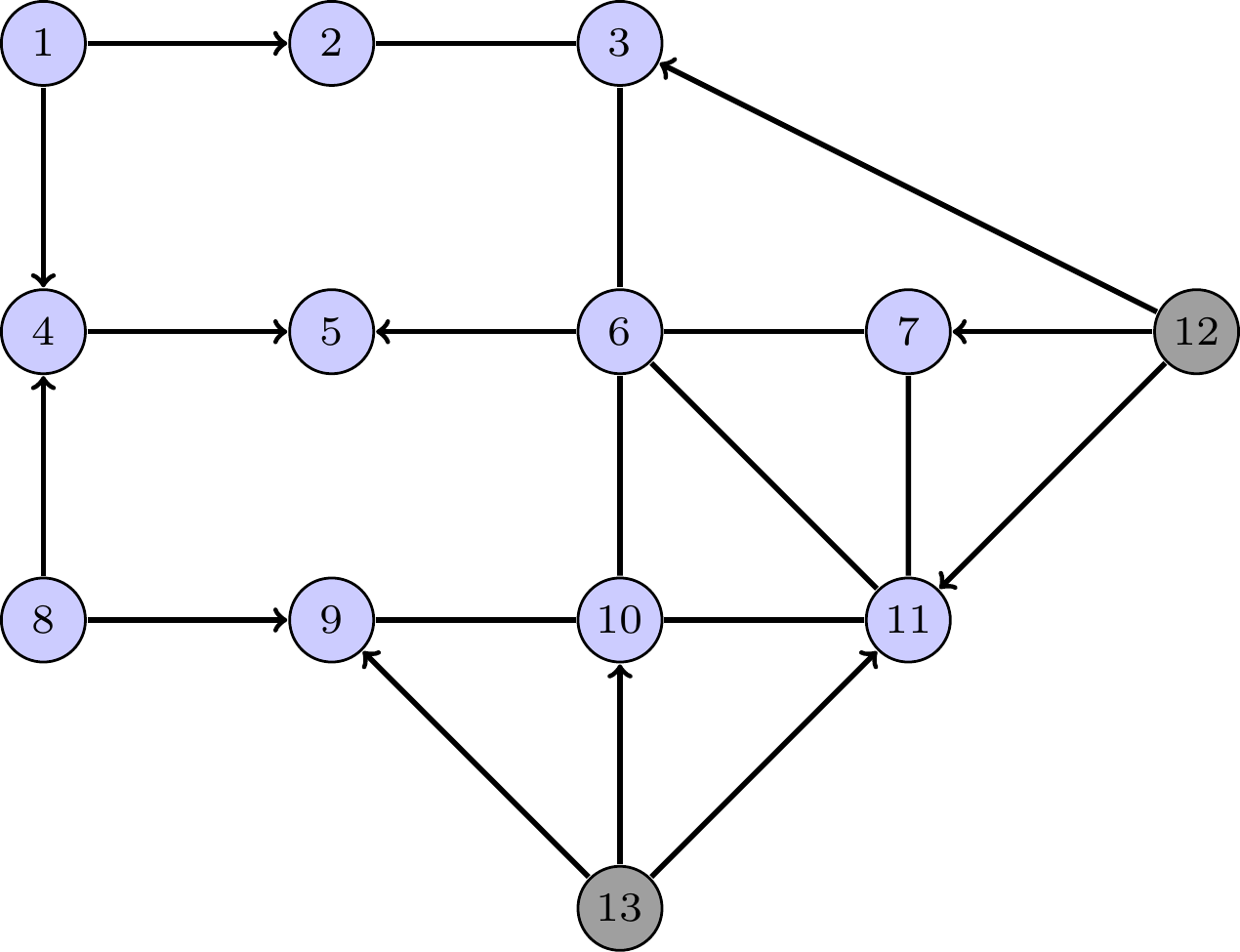}
			\caption{Extending an $\alpha$\hyp{}monograph} 
			\label{fig:exmon}
		\end{center}
\end{figure}

\section{\bf Monograph Structure}\label{sec:monstruc}

\Cref{def:mono1} characterizes $\alpha$\hyp{}monographs by a condition concerning
the traversal of cycles. In the following, we will render the implications of this condition more
tangible, by way of studing mixed walks.
To this end, let $D$ be a connected mixed graph. Fix any $u\in V(D)$ and consider some mixed walk $W$ in $D$, say $u=r_1,\ldots,r_k$.
Denote the contained partial walks $r_1,\ldots,r_j$ by $W_j$ (for $j=1,\ldots,k$). Consequently,
\begin{align}\label{eq:stfunc1}
h_{\alpha}({W_1}) & = 1 
\end{align}
and
\begin{align}\label{eq:stfunc2}
h_{\alpha}({W_{j+1}}) & = 
   \begin{cases} 
	    h_{\alpha}({W_j}) & \text{ if } r_j \kla r_{j+1} \\
	    \alpha h_{\alpha}({W_j}) & \text{ if } r_j\ger r_{j+1} \\
	    \bar\alpha h_{\alpha}({W_j}) & \text{ if } r_j\gel r_{j+1}  \\
	 \end{cases}
\end{align}
for $j=1,\ldots,k-1$.

\begin{example}\label{ex:wj}
\Cref{fig:walk} shows the values $h_{\gamma}({W_j})$ -- each value written near the respective $j$-th vertex along the walk -- 
for three different mixed walks $W$ in a mixed example graph.
\end{example}

\begin{figure}
	\centering
	\begin{subfigure}{.45\textwidth}
		\includegraphics[scale=0.6]{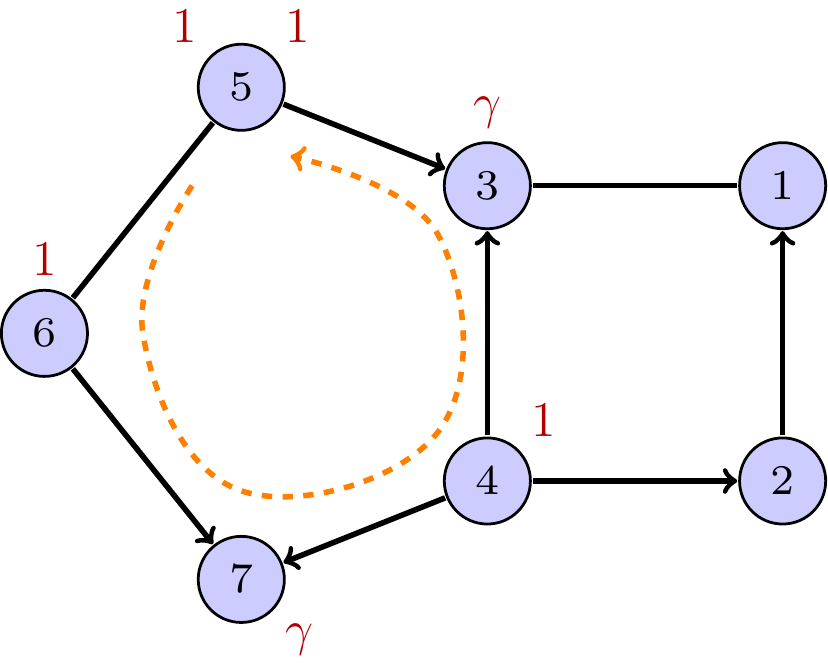}
\caption{Walk from/to vertex no.\ 5}
  \end{subfigure}
	\hspace*{0.75cm}
	\begin{subfigure}{.45\textwidth}
		\includegraphics[scale=0.6]{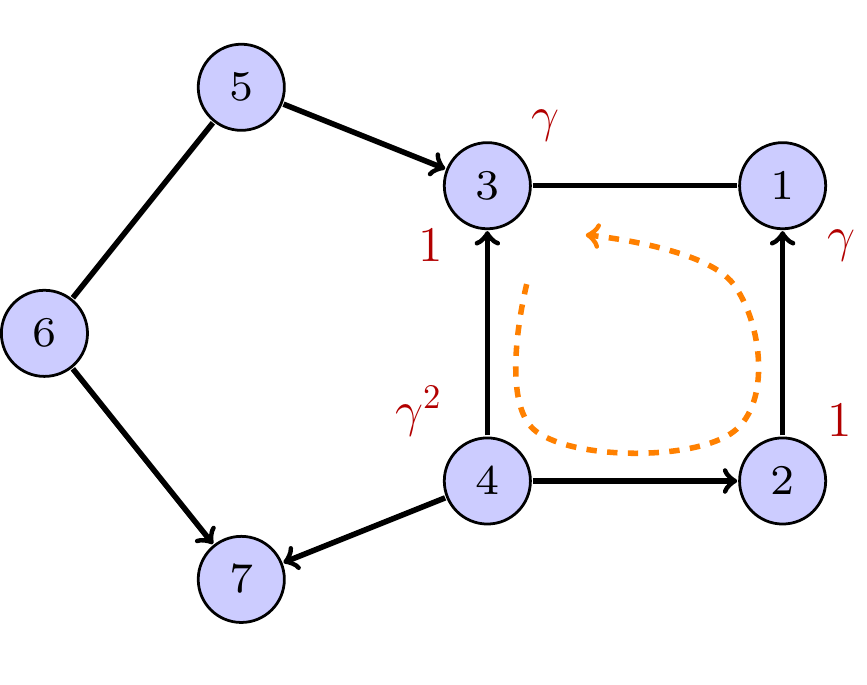}
\caption{Walk from/to vertex no.\ 3}
  \end{subfigure} \\
	\begin{subfigure}{.45\textwidth}
		\includegraphics[scale=0.6]{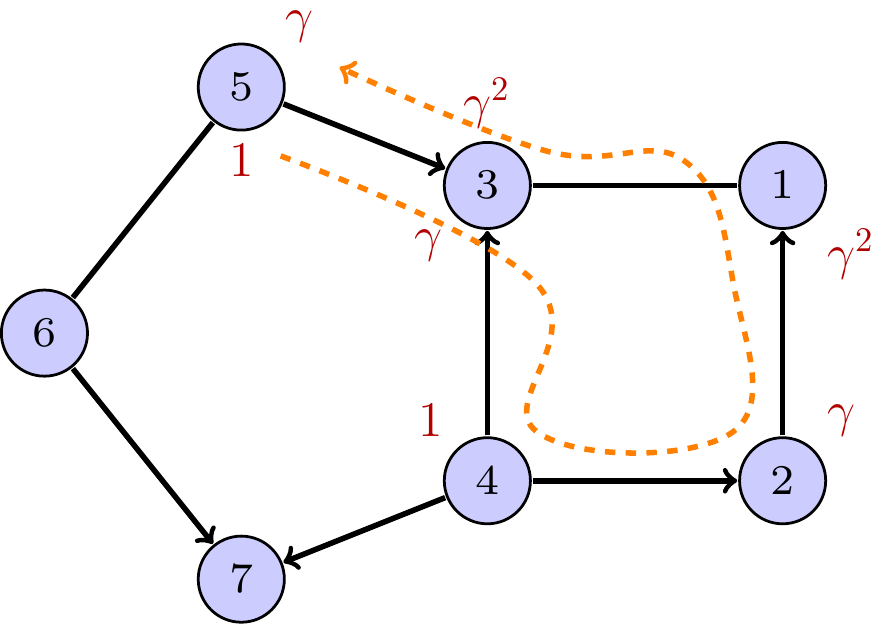}
\caption{Another walk from/to vertex no.\ 5}
	\end{subfigure}
	\caption{Values of $h_{\gamma}(W_j)$ for three closed mixed walks $W$}
	\label{fig:walk}
\end{figure}

Next, we state three useful basic properties of $h_{\gamma}$ with respect to mixed walks (some of which already implicit
in the previous section).

\begin{proposition}\label{fwbw}
Let $W$ be a mixed walk containing $r$ forward arcs and $s$ backward arcs. Then
$
 h_{\alpha}(W) = \alpha^r\bar\alpha^s.
$
\end{proposition}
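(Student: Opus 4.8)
The plan is to argue by induction on the length $k$ of the mixed walk $W=r_1,\ldots,r_k$, using exactly the recursive description already set up in \eqref{eq:stfunc1} and \eqref{eq:stfunc2}. Write $W_j$ for the partial walk $r_1,\ldots,r_j$ and let $r_j$, $s_j$ denote the number of forward arcs and backward arcs, respectively, encountered along $W_j$. The claim to be proved by induction is that $h_\alpha(W_j)=\alpha^{r_j}\bar\alpha^{s_j}$ for every $j=1,\ldots,k$; taking $j=k$ yields the proposition.

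For the base case $j=1$ the walk $W_1$ traverses no edge at all, so $r_1=s_1=0$ and $h_\alpha(W_1)=1=\alpha^0\bar\alpha^0$ by \eqref{eq:stfunc1}. For the inductive step, assume $h_\alpha(W_j)=\alpha^{r_j}\bar\alpha^{s_j}$ and consider the edge between $r_j$ and $r_{j+1}$. Since $W$ is a mixed walk, $r_j$ and $r_{j+1}$ are adjacent in $D$, so this edge is a digon, a forward arc, or a backward arc — there is no fourth possibility, and in particular the ``otherwise'' (value $0$) case of \Cref{halpha} never occurs. In the digon case \eqref{eq:stfunc2} gives $h_\alpha(W_{j+1})=h_\alpha(W_j)$ while $(r_{j+1},s_{j+1})=(r_j,s_j)$; in the forward case it gives $h_\alpha(W_{j+1})=\alpha\,h_\alpha(W_j)$ while $(r_{j+1},s_{j+1})=(r_j+1,s_j)$; in the backward case it gives $h_\alpha(W_{j+1})=\bar\alpha\,h_\alpha(W_j)$ while $(r_{j+1},s_{j+1})=(r_j,s_j+1)$. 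In each case the asserted identity is preserved, completing the induction.

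Alternatively, one can bypass the induction entirely: by \Cref{halphw}, $h_\alpha(W)$ is literally the product $h_{r_1r_2}h_{r_2r_3}\cdots h_{r_{k-1}r_k}$, and by \Cref{halpha} each factor equals $1$, $\alpha$, or $\bar\alpha$ according as the corresponding step is a digon, forward arc, or backward arc (never $0$, again because consecutive vertices of a walk are adjacent). Collecting the factors gives $h_\alpha(W)=\alpha^r\bar\alpha^s\cdot 1^{\,t}=\alpha^r\bar\alpha^s$, where $t$ is the number of digons traversed. There is essentially no obstacle here; the only point that deserves an explicit word is the observation that the walk hypothesis rules out the zero entries of $H^\alpha$, so that every factor in the product is a unit power of $\alpha$ of the expected type.
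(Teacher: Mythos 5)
Your proof is correct; the paper states this proposition without proof as an immediate consequence of \Cref{halphw} and \Cref{halpha}, and your second (direct product) argument is precisely the tacit reasoning intended, with the induction being an equivalent unwinding of it. The only blemish is notational: you reuse $r_j$ both for the $j$-th vertex of the walk and for the forward-arc count of $W_j$, which should be disambiguated.
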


\begin{proposition}\label{frev}
Let $W'$ be a mixed walk and $W''$ the corresponding reverse walk. Let $W$ be the walk
resulting from concatenating $W'$ and $W''$. Then
$
 h_{\alpha}(W) = 1.
$
\end{proposition}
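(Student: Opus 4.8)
The plan is to reduce Proposition~\ref{frev} to Proposition~\ref{fwbw} together with the elementary fact that reversing a walk swaps the roles of forward and backward arcs. First I would write $W'$ as the vertex sequence $v_1,v_2,\ldots,v_k$, so that $W''$ is $v_k,v_{k-1},\ldots,v_1$ and the concatenation $W$ is $v_1,\ldots,v_{k-1},v_k,v_{k-1},\ldots,v_1$. The key observation is that if an edge $v_jv_{j+1}$ is a forward arc when traversed in the direction of $W'$, then the same edge is a backward arc when traversed in the direction of $W''$, and vice versa; a digon remains a digon under either traversal. Hence if $W'$ contains $r$ forward arcs and $s$ backward arcs, then $W''$ contains exactly $s$ forward arcs and $r$ backward arcs, while digons contribute a factor $1$ regardless.

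Next I would apply Proposition~\ref{fwbw} to each piece. Since $h_\alpha$ of a walk is the product of the entries $h_{v_iv_{i+1}}$ along it, and concatenation simply multiplies these products, we get $h_\alpha(W) = h_\alpha(W')\,h_\alpha(W'')$. By Proposition~\ref{fwbw}, $h_\alpha(W') = \alpha^r\bar\alpha^s$ and $h_\alpha(W'') = \alpha^s\bar\alpha^r$. Multiplying, $h_\alpha(W) = \alpha^r\bar\alpha^s\alpha^s\bar\alpha^r = (\alpha\bar\alpha)^{r+s} = 1$, using $|\alpha| = 1$, i.e.\ $\alpha\bar\alpha = 1$. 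Alternatively, and perhaps more cleanly, one can argue entry-by-entry: for each edge traversed, its contribution in $W'$ and its contribution in $W''$ are complex conjugates of one another (an arc contributes $\alpha$ one way and $\bar\alpha$ the other, a digon contributes $1$ both ways), so the two partial products $h_\alpha(W')$ and $h_\alpha(W'')$ are themselves complex conjugates, whence $h_\alpha(W) = h_\alpha(W')\overline{h_\alpha(W')} = |h_\alpha(W')|^2 = 1$, since every factor has modulus $1$.

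I do not expect any genuine obstacle here: the statement is essentially a bookkeeping lemma. The only point requiring a word of care is making precise that "the corresponding reverse walk" traverses each edge in the opposite orientation, so that the forward/backward counts are exchanged — but this is immediate from Definition~\ref{halphw} and the observation that $h_{vu} = \overline{h_{uv}}$ since $H^\alpha$ is Hermitian. This Hermitian-symmetry identity is in fact the cleanest route: $h_\alpha(W'') = \prod_{j} h_{v_{j+1}v_j} = \prod_j \overline{h_{v_jv_{j+1}}} = \overline{h_\alpha(W')}$, and then $h_\alpha(W) = h_\alpha(W')\overline{h_\alpha(W')} = 1$ because each factor lies on the unit circle. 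I would present this last version as the proof, mentioning the forward/backward reformulation only as intuition.
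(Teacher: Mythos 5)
Your proof is correct; the paper states this proposition without proof, treating it as an immediate bookkeeping consequence of Definition~\ref{halphw}, and your argument (that $h_{v_{j+1}v_j}=\overline{h_{v_jv_{j+1}}}$ by Hermiticity, so $h_\alpha(W'')=\overline{h_\alpha(W')}$ and $h_\alpha(W)=\vert h_\alpha(W')\vert^2=1$ since every factor lies on the unit circle) is exactly the intended reasoning. No gaps.
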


\begin{proposition}\label{fconcat}
Let $W', W''$ be two mixed walks such that the final vertex of $W'$ is the start vertex of $W''$. Let $W$ be the walk
resulting from concatenating $W'$ and $W''$. Then
$
 h_{\alpha}(W) = h_{\alpha}(W') h_{\alpha}(W'').
$
\end{proposition}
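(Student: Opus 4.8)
The plan is simply to unwind \Cref{halphw}. By definition, $h_\alpha(W)$ is the product of the Hermitian adjacency matrix entries $h_{v_jv_{j+1}}$ taken over the consecutive edges of the walk $W$. Concatenating two walks merely concatenates these lists of consecutive edges, so the statement reduces to the elementary fact that a product over a list equals the product over a prefix times the product over the complementary suffix.

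Concretely, I would write $W'=v_1,\ldots,v_p$ and $W''=v_p,\ldots,v_q$ — here using the hypothesis that the last vertex of $W'$ coincides with the first vertex of $W''$ — so that $W=v_1,\ldots,v_q$. Then
\[
h_\alpha(W)=\prod_{j=1}^{q-1}h_{v_jv_{j+1}}=\left(\prod_{j=1}^{p-1}h_{v_jv_{j+1}}\right)\left(\prod_{j=p}^{q-1}h_{v_jv_{j+1}}\right)=h_\alpha(W')\,h_\alpha(W''),
\]
which is the claim. The one subtlety worth a sentence is the degenerate case in which $W'$ or $W''$ consists of a single vertex: then the corresponding product is empty, hence equal to $1$, and the identity continues to hold.

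I do not expect any genuine obstacle here; this is a bookkeeping identity, and it is in fact the workhorse behind the two preceding propositions. For completeness I would note that \Cref{fconcat} combined with \Cref{fwbw} immediately reproves \Cref{frev}: if $W''$ is the reverse of $W'$, then it has $s$ forward and $r$ backward arcs where $W'$ had $r$ and $s$, so $h_\alpha(W'')=\alpha^s\bar\alpha^r=\overline{h_\alpha(W')}$, and therefore $h_\alpha(W)=h_\alpha(W')\overline{h_\alpha(W')}=\lvert h_\alpha(W')\rvert^2=1$ since $\lvert\alpha\rvert=1$.
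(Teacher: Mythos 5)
Your proof is correct and is exactly the argument the paper leaves implicit: the paper states \Cref{fconcat} without proof, since it follows immediately from \Cref{halphw} by splitting the product over consecutive edges at the shared vertex, which is precisely what you do. Your remarks on the single-vertex degenerate case and on rederiving \Cref{frev} via \Cref{fwbw} are sound but not needed.
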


As can be seen from \Cref{fig:walk}, $h_{\alpha}$ can be used to assign (possibly multiple) complex numbers to each of the
vertices of a mixed graph. In particular, we are concerned about the possible values that get assigned to the
start/end vertices of closed walks:

\begin{definition}\label{def:sammlung}
Let $D$ be a mixed graph.
The $\alpha$\hyp{}store $S^\alpha(u)$ of  $u\in V(D)$ is defined as
$
S^\alpha(u)=\{ h_{\alpha}(W):\ \text{$W$ is a closed walk in $D$ from/to $u$}\}.
$
Let $s^\alpha(u) = \vert S^\alpha(u) \vert$ denote the associated store size.
\end{definition}

Trivially, $1\in S^\alpha(u)$ so that $s^\alpha(u)\geq 1$. As the following \namecref{samesto} shows,
the store content is independent of $u$, hence we may speak of `the' $\alpha$\hyp{}store of a mixed graph:

\begin{lemma}\label{samesto}
Let $u,v\in V(D)$. Then
$
S^\alpha(u) = S^\alpha(v).
$
\end{lemma}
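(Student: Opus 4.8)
The plan is to exploit connectedness together with the concatenation and reversal properties (\Cref{fconcat} and \Cref{frev}) to transport closed walks based at one vertex to closed walks based at another. Since $D$ is connected, fix vertices $u,v\in V(D)$ and choose a mixed walk $P$ from $u$ to $v$; let $P^{-1}$ denote its reverse walk, which runs from $v$ to $u$.

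First I would show $S^\alpha(u)\subseteq S^\alpha(v)$. Take any $z\in S^\alpha(u)$, realized by a closed walk $W$ from/to $u$, so $h_\alpha(W)=z$. Form the closed walk $W' = P^{-1} \cdot W \cdot P$ based at $v$ (first go from $v$ to $u$ along $P^{-1}$, then around $W$, then back from $u$ to $v$ along $P$). By \Cref{fconcat}, $h_\alpha(W') = h_\alpha(P^{-1})\,h_\alpha(W)\,h_\alpha(P)$. Now $P^{-1}\cdot P$ is precisely the concatenation of a walk with its reverse, so by \Cref{frev} we have $h_\alpha(P^{-1})\,h_\alpha(P) = h_\alpha(P^{-1}\cdot P) = 1$ — here I am using \Cref{fconcat} once more to split that product, or equivalently just noting $h_\alpha(P^{-1}) = \overline{h_\alpha(P)}$ from \Cref{fwbw} since forward and backward arc counts swap under reversal, and $|h_\alpha(P)|=1$. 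Either way, $h_\alpha(W') = h_\alpha(W) = z$, so $z\in S^\alpha(v)$.

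The reverse inclusion $S^\alpha(v)\subseteq S^\alpha(u)$ is identical with the roles of $u$ and $v$ interchanged (using the walk $P$ in place of $P^{-1}$), giving $S^\alpha(u) = S^\alpha(v)$. I would write it out once and then simply invoke symmetry.

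I do not expect a genuine obstacle here; the only point requiring a little care is being explicit that $h_\alpha(P^{-1})\,h_\alpha(P)=1$, which is why the reversal lemma \Cref{frev} and the multiplicativity lemma \Cref{fconcat} were stated beforehand. One should also note in passing that $D$ being connected is exactly what guarantees the walk $P$ exists; without connectedness the statement would of course fail across components.
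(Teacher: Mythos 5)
Your argument is correct and is essentially the paper's own proof: both conjugate a closed walk at one vertex by a connecting walk and its reverse, then cancel the two connecting factors via the reversal and concatenation properties (\Cref{frev}, \Cref{fconcat}), and finish by symmetry. Your side remark that connectedness of $D$ is needed for the walk $P$ to exist is a fair observation, since the lemma statement itself omits that standing hypothesis.
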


\begin{myproof}
Let $W''$ be a closed mixed walk from/to $v$. Clearly, $h_{\alpha}(W'')\in S^\alpha(v)$.
Now let $W'$ be a mixed walk from $u$ to $v$ and let $W'''$ its reverse walk.
Concatenating $W'$, $W''$, $W'''$ one obtains a closed mixed walk $W$ from/to $v$. 
Using \Cref{frev} and \Cref{fconcat} we get
$
  h_{\alpha}(W) = h_{\alpha}(W') h_{\alpha}(W'') h_{\alpha}(W''') = h_{\alpha}(W''),
$	
so that $h_{\alpha}(W'')\in S^\alpha(u)$. Repeating the argument with the roles of $u$ and $v$ swapped yields $S^\alpha(v)= S^\alpha(u)$.
\end{myproof}

\begin{theorem}\label{stchar}
Let $D$ be a connected mixed graph. Then the following statements are equivalent:
\begin{itemize}\item[]\begin{enumerate}[label=(\roman*)]
\item \label{stchar1} $s^\alpha(u)=1$ for at least one $u\in V(D)$.
\item \label{stchar2} $s^\alpha(u)=1$ for every $u\in V(D)$.
\item \label{stchar4} $h_{\alpha}(W')=h_{\alpha}(W'')$ for every pair $W',W''$ of mixed walks sharing the same start and end vertices.
\item \label{stchar5} $D$ is an $\alpha$\hyp{}monograph (of \kindone kind).
\end{enumerate}\end{itemize}
\end{theorem}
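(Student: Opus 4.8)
The plan is to prove the cycle of implications $\ref{stchar1}\Rightarrow\ref{stchar5}\Rightarrow\ref{stchar4}\Rightarrow\ref{stchar2}\Rightarrow\ref{stchar1}$, where the last implication is trivial since any single vertex is a vertex. The central conceptual tool is \Cref{samesto}: the store is the same set at every vertex, so I may write $S^\alpha$ unambiguously, and $s^\alpha(u)$ is independent of $u$. This already collapses \ref{stchar1} and \ref{stchar2} into the single statement ``$S^\alpha=\{1\}$'', so really the work is to connect ``$S^\alpha=\{1\}$'' to the cycle condition \ref{stchar5} and to the well-definedness condition \ref{stchar4}.

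For \ref{stchar1}$\Rightarrow$\ref{stchar5}: assume $s^\alpha(u)=1$ for some $u$; by \Cref{samesto} then $S^\alpha(v)=\{1\}$ for all $v$, since $1\in S^\alpha(v)$ always and the set has one element. Now let $C$ be any cycle in $D$ and $\vec C$ a closed walk traversing it, based at some vertex $v$ on $C$. Then $\vec C$ is a closed walk from/to $v$, so $h_\alpha(\vec C)\in S^\alpha(v)=\{1\}$, i.e.\ $h_\alpha(\vec C)=1$. Hence $D$ is an $\alpha$-monograph of first kind. (I should note that the value $h_\alpha(\vec C)=1$ is independent of the choice of base vertex and traversal direction on $C$, which follows from \Cref{fwbw} together with the observation recorded after \Cref{1stspecf}, $h_\alpha(\vec C)=\alpha^{x-y}$; reversing direction inverts the exponent, and $1^{-1}=1$.)

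For \ref{stchar5}$\Rightarrow$\ref{stchar4}: let $W'$ and $W''$ be two mixed walks from a common start $a$ to a common end $b$, and let $\widetilde{W''}$ be the reverse of $W''$. Concatenating $W'$ with $\widetilde{W''}$ yields a closed mixed walk $W$ from/to $a$. By \Cref{fconcat} and \Cref{frev}-style reasoning, $h_\alpha(W)=h_\alpha(W')\,h_\alpha(\widetilde{W''})$, and since $W''$ concatenated with $\widetilde{W''}$ has $h_\alpha=1$ by \Cref{frev} while $h_\alpha(W'')h_\alpha(\widetilde{W''})=1$ by \Cref{fconcat}, we get $h_\alpha(\widetilde{W''})=\overline{h_\alpha(W'')}$ and in fact $h_\alpha(\widetilde{W''})=h_\alpha(W'')^{-1}$ as a unit-modulus number. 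So $h_\alpha(W)=h_\alpha(W')/h_\alpha(W'')$. The task is therefore to show $h_\alpha(W)=1$ for every closed walk $W$ (this is also exactly \ref{stchar5}$\Rightarrow$\ref{stchar2}, i.e.\ $S^\alpha=\{1\}$). The main obstacle is here: the monograph hypothesis only directly controls \emph{cycles} (closed walks with no repeated vertex other than the endpoints), whereas a closed walk may repeat vertices and edges arbitrarily. I would handle this by induction on the length of $W$: a closed walk of minimal length is a cycle (or backtracks an edge, contributing a factor $\alpha\bar\alpha=1$ by \Cref{frev}), and if $W$ has a repeated interior vertex, split $W$ at that vertex into two shorter closed walks via \Cref{fconcat}, apply the induction hypothesis to each, and multiply. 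One must be slightly careful that the two pieces are genuinely shorter and are themselves closed walks; this is a standard decomposition of a closed walk into closed sub-walks, each of which eventually reduces to cycles and backtracks. Then $h_\alpha(W)=1$, hence $h_\alpha(W')=h_\alpha(W'')$, giving \ref{stchar4}; and taking $W''$ a closed walk and $W'$ the trivial walk gives $h_\alpha(W'')=1$, i.e.\ $S^\alpha(u)=\{1\}$ and $s^\alpha(u)=1$ for all $u$, which is \ref{stchar2} (and a fortiori \ref{stchar1}). This closes the cycle of implications.

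Finally, for a clean write-up I would actually route the argument as \ref{stchar5}$\Rightarrow$\ref{stchar2}$\Rightarrow$\ref{stchar4}$\Rightarrow$\ref{stchar1}$\Rightarrow$\ref{stchar5}: the implication \ref{stchar2}$\Rightarrow$\ref{stchar1} is immediate, \ref{stchar1}$\Rightarrow$\ref{stchar5} is the short argument above, \ref{stchar5}$\Rightarrow$\ref{stchar2} is the induction on closed-walk length, and \ref{stchar4}$\Rightarrow$\ref{stchar1} follows by comparing an arbitrary closed walk at $u$ with the trivial walk at $u$. The one implication requiring real work remains \ref{stchar5}$\Rightarrow$\ref{stchar2}, and the decomposition-of-closed-walks induction is the crux.
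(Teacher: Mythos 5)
Your proof is correct and follows essentially the same route as the paper, whose own proof is just a one-line appeal to \Cref{samesto}, \Cref{frev} and \Cref{fconcat}. You correctly identify, and fill in, the one step the paper leaves tacit: reducing arbitrary closed walks to cycles (and edge backtracks) via the split-at-a-repeated-vertex induction, which is indeed the only substantive content of the equivalence.
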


\begin{myproof}
Observing \Cref{def:mono1} and \Cref{def:sammlung}, this follows from \Cref{frev}, \Cref{fconcat} and \Cref{samesto}.
\end{myproof}

\begin{example}
The mixed graph shown in \Cref{fig:walk} is not a $\gamma$\hyp{}monograph, as opposed to the slightly different graph depicted
in \Cref{fig:walkmono}. Notice how, in view of \Cref{stchar}, every mixed walk with the same start/end vertex creates exactly the same walk values along the way.
\end{example}

\begin{figure}
	\centering
	\begin{subfigure}{.45\textwidth}
		\includegraphics[scale=0.6]{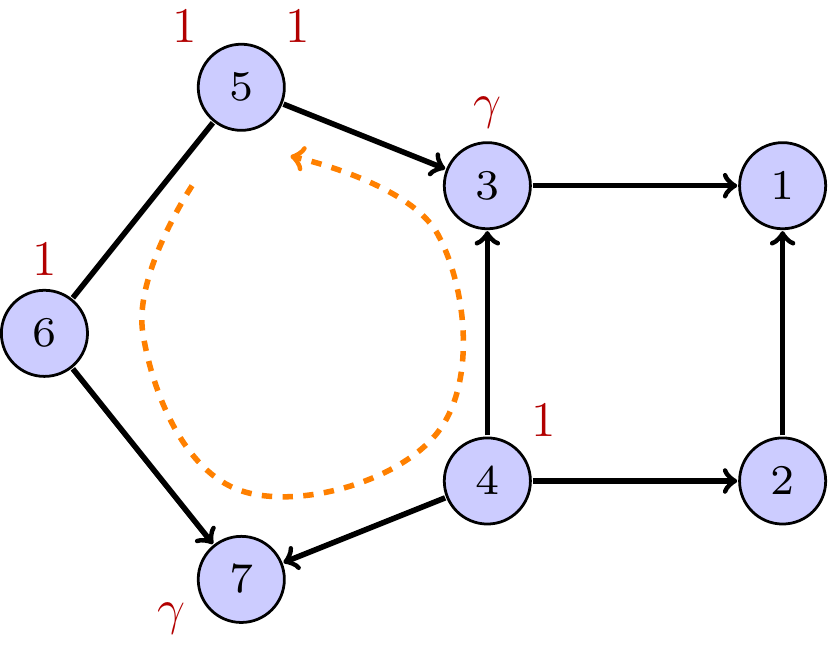}
\caption{Walk \#1 from/to vertex no.\ 5}
  \end{subfigure}
	\hspace*{0.5cm}
	\begin{subfigure}{.45\textwidth}
		\includegraphics[scale=0.6]{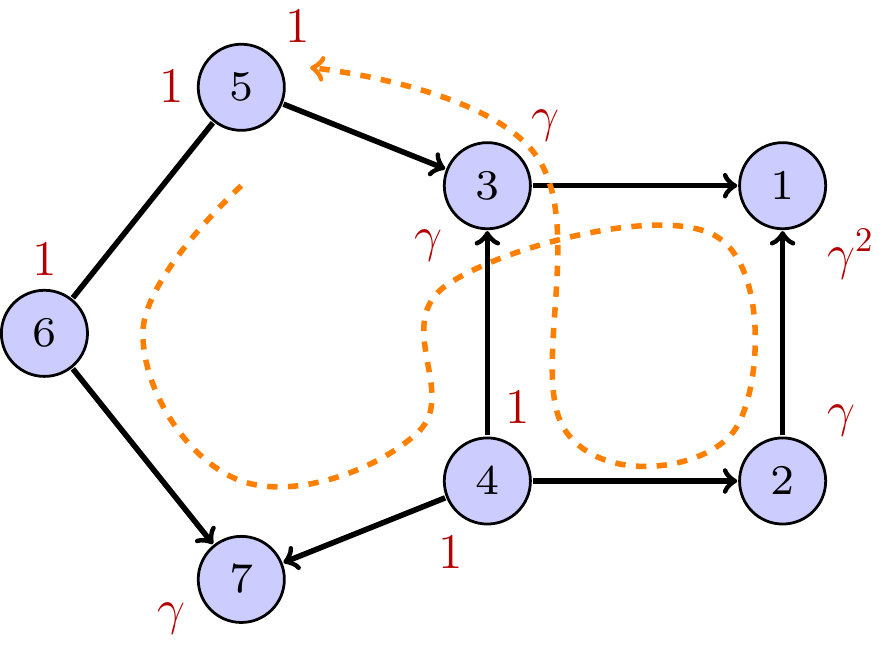}
\caption{Walk \#2 from/to vertex no.\ 5}\label{fig:wmono2}
  \end{subfigure} 
	\caption{Values of $h_{\gamma}(W_j)$ for two closed walks in a $\gamma$\hyp{}monograph}
	\label{fig:walkmono}
\end{figure}

In view of \Cref{stchar}, one can characterize $\alpha$\hyp{}monographs by the way their vertices can be partitioned:

\begin{theorem}\label{monopart} 
A connected mixed graph $D$ is an 
$\alpha$\hyp{}monograph (of \kindone kind) if and only if $V(D)$ can be partitioned
into sets $V_{\alpha^0},V_{\alpha^1},V_{\alpha^2},\ldots$ (some of which possibly empty) such that
there are no digons between any two sets $V_{\alpha^{j_1}}, V_{\alpha^{j_2}}$ with $j_1\not=j_2$ and
every arc starting in a set $V_{\alpha^j}$ ends in $V_{\alpha^{j-1}}$.
\end{theorem}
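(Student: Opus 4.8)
The plan is to read the desired partition off the walk-value function developed in \Cref{sec:monstruc}, using throughout the equivalences of \Cref{stchar}; in particular, in an $\alpha$\hyp{}monograph any two mixed walks sharing start and end vertices have the same $h_\alpha$-value (condition~\ref{stchar4}). So the ``only if'' direction amounts to turning that constancy into a labelling of the vertices, and the ``if'' direction amounts to a telescoping computation along cycles.

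For the ``only if'' direction, fix a base vertex $u\in V(D)$. Since $D$ is connected, every $w\in V(D)$ is joined to $u$ by some mixed walk $W$, and by \Cref{stchar} the value $h_\alpha(W)$ is independent of the choice of $W$; set $\phi(w):=h_\alpha(W)$. By \Cref{halphw} we have $\phi(w)\in\{\alpha^r\}_{r\in\mathbb Z}$, so $\phi(w)=\alpha^{j}$ for some integer $j$, and I define $V_{\alpha^{j}}:=\{\,w\in V(D):\phi(w)=\alpha^{j}\,\}$; this partitions $V(D)$ (after a harmless shift of indices one may assume they are non-negative, and in the finite-order case only finitely many are relevant). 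Then I would verify the two required properties one edge at a time via \Cref{fconcat} and the entry values of \Cref{halpha}: if $w_1\kla w_2$, prepending this digon (entry $1$) to a walk from $w_2$ to $u$ gives $\phi(w_1)=\phi(w_2)$, so both endpoints lie in the same block and no digon joins distinct blocks; if $w_1\ger w_2$, prepending this arc (entry $\alpha$) gives $\phi(w_1)=\alpha\,\phi(w_2)$, i.e.\ $\phi(w_2)=\bar\alpha\,\phi(w_1)$, so $w_1\in V_{\alpha^{j}}$ forces $w_2\in V_{\alpha^{j-1}}$, which is exactly the arc condition.

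For the ``if'' direction, suppose such a partition is given and fix, for each $v\in V(D)$, an integer $e(v)$ with $v\in V_{\alpha^{e(v)}}$. Let $C$ be any cycle of $D$ and $\vec{C}=v_1,v_2,\ldots,v_k,v_1$ a traversal. I claim the factor contributed by the step from $v_i$ to $v_{i+1}$ (indices mod $k$) equals $\alpha^{\,e(v_i)-e(v_{i+1})}$ as a complex number: for a digon the two blocks coincide and the factor is $1$; for $v_i\ger v_{i+1}$ the block drops by one and the factor is $\alpha$; for $v_i\gel v_{i+1}$, i.e.\ $v_{i+1}\ger v_i$, the block rises by one and the factor is $\bar\alpha=\alpha^{-1}$ — and each case matches $\alpha^{e(v_i)-e(v_{i+1})}$, using that $\alpha^{n}=1$ whenever $\alpha$ has finite order $n$. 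Multiplying over the traversal, $h_\alpha(\vec{C})=\alpha^{\sum_{i=1}^{k}(e(v_i)-e(v_{i+1}))}=\alpha^{0}=1$ because the integer exponents telescope; by \Cref{def:mono1}, $D$ is an $\alpha$\hyp{}monograph.

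The one delicate point is the exponent bookkeeping in the ``if'' direction: when $\alpha$ has finite order the subscript $j$ of $V_{\alpha^j}$ is only determined modulo that order, so the per-edge identities hold merely as equalities of complex numbers; fixing once and for all the integer representatives $e(v)$ is precisely what makes the final telescoping a genuine identity in $\mathbb Z$ rather than something valid only modulo the order of $\alpha$. Everything else is routine, and connectedness of $D$ is used only to guarantee the existence of the walks defining $\phi$.
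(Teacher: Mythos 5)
Your proof is correct and follows essentially the same route as the paper's: the forward direction defines the partition via the (well-defined, by \Cref{stchar}) walk values relative to a fixed base vertex, and the converse verifies the monograph condition by propagating the block labels along walks, here packaged as a telescoping product over each cycle that lands directly on \Cref{def:mono1}. You are in fact somewhat more explicit than the paper in checking the digon/arc conditions edge by edge and in handling the modular index bookkeeping, but the underlying argument is the same.
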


\begin{myproof}
Given an $\alpha$\hyp{}monograph $D$, use \Cref{stchar} \ref{stchar4} as follows. 
Fix any vertex $u\in V(D)$ and assign each vertex $v\in V(D)$ to the set $V_{\alpha^j}$, where $\alpha^j=h_\alpha(W)$ for
an arbitrary walk from $u$ to $v$. Conversely, consider a mixed graph with a vertex partition as supposed. 
We may assume $V_{0}\not=\emptyset$. Fix any $v\in V_{0}$. Considering some vertex 
$w\in V_{\alpha^{k}}$ and an arbitrary mixed walk $W$ from $v$ to $w$, we see that the partition structure aligns with
\eqref{eq:stfunc1} and \eqref{eq:stfunc2}, so that inductively we conclude $\alpha^k=h_\alpha(W)$. 
Thus, condition \ref{stchar4} of \Cref{stchar} is satisfied.
\end{myproof}

Furthermore, the store values of an $\alpha$\hyp{}monograph $D$ permit us to convert the eigenvectors of $\Gamma(D)$
into eigenvectors of $D$:

\begin{theorem}\label{monoeigtransfer}
Let $D$ be an $\alpha$\hyp{}monograph and $x=[x_u]_{u\in V(D)}$ an eigenvector of $\Gamma(D)$ for eigenvalue $\lambda$.
Fixing a reference vertex $v\in V(D)$, define the vector
	\begin{align}\label{eq:defy}
		y=[y_r]_{r\in V(D)}=\left[{h_\alpha(v\kzg r)}{x_r}\right]_{r\in V(D)},
  \end{align}
where  $v\kzg r$ is an arbitrary mixed walk from $v$ to $r$ in $D$ (cf.\ \Cref{stchar} \ref{stchar4}).
Then $y$ is an $\alpha$\hyp{}eigenvector of $D$ for eigenvalue $\lambda$.
\end{theorem}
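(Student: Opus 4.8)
The plan is to verify the summation rule \eqref{eq:srule} for $y$ directly, using the eigenvector equation for $x$ in $\Gamma(D)$ together with the monograph structure. First I would fix the reference vertex $v$ and note that, by \Cref{stchar} \ref{stchar4}, the quantity $h_\alpha(v\kzg r)$ does not depend on the chosen walk, so $y$ is well-defined. Abbreviate $\mu_r := h_\alpha(v\kzg r)$, so that $y_r = \mu_r x_r$; each $\mu_r$ is a power of $\alpha$, in particular $|\mu_r|=1$.

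Next I would pick an arbitrary vertex $u\in V(D)$ and compute the right-hand side of \eqref{eq:srule} for $y$ at $u$, namely
\begin{align*}
\sum_{u\kla r} y_r + \alpha\sum_{u\ger r} y_r + \bar\alpha\sum_{u\gel r} y_r
= \sum_{u\kla r} \mu_r x_r + \alpha\sum_{u\ger r} \mu_r x_r + \bar\alpha\sum_{u\gel r} \mu_r x_r .
\end{align*}
The key observation is that for any neighbour $r$ of $u$, a walk from $v$ to $r$ can be obtained by first walking from $v$ to $u$ and then appending the single edge $ur$; hence by \Cref{fconcat} and \Cref{halphw}, $\mu_r = \mu_u\cdot h_{ur}$, where $h_{ur}=1$ if $u\kla r$, $h_{ur}=\alpha$ if $u\ger r$, and $h_{ur}=\bar\alpha$ if $u\gel r$. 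Substituting this, every term in the sum above becomes $\mu_u$ times a contribution to the adjacency sum for $x$ in $\Gamma(D)$: a digon $u\kla r$ contributes $\mu_u\cdot 1\cdot x_r$; an arc $u\ger r$ contributes $\alpha\cdot\mu_u\bar\alpha\cdot x_r=\mu_u x_r$; an arc $u\gel r$ contributes $\bar\alpha\cdot\mu_u\alpha\cdot x_r=\mu_u x_r$. (Here I use $\alpha\bar\alpha=1$.) Therefore the right-hand side equals $\mu_u\sum_{r\sim u} x_r = \mu_u\,\lambda\,x_u = \lambda\,y_u$, where the middle equality is the eigenvector equation for $x$ in $\Gamma(D)$ at vertex $u$. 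Since $u$ was arbitrary, \Cref{srule} shows $y$ is an $\alpha$-eigenvector of $D$ for $\lambda$; and $y\neq 0$ because $|\mu_r|=1$ forces $y_r=0\iff x_r=0$, while $x\neq 0$.

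I do not anticipate a serious obstacle here; the proof is essentially bookkeeping. The one point that needs care is the well-definedness of $y$: the formula \eqref{eq:defy} only makes sense because $D$ is an $\alpha$-monograph, so I would state that reliance on \Cref{stchar} explicitly at the outset. A secondary subtlety is that $D$ need not be connected in general, but \Cref{stchar} and the walk-based definition implicitly assume connectivity (a walk from $v$ to $r$ must exist); if $D$ is disconnected one applies the argument componentwise, choosing a reference vertex in each component, and this does not affect the computation above.
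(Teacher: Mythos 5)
Your overall strategy---checking the summation rule \eqref{eq:srule} for $y$ vertex by vertex, using the one-step recursion for $h_\alpha$ along walks---is exactly the approach of the paper's proof, and your remarks on well-definedness and on treating components separately are sound. But the key substitution step contains a genuine error. You correctly record the recursion $\mu_r=\mu_u\cdot h_{ur}$, so that $\mu_r=\alpha\mu_u$ when $u\ger r$ and $\mu_r=\bar\alpha\mu_u$ when $u\gel r$; yet in the very next sentence you substitute the opposite values ($\mu_r=\bar\alpha\mu_u$ for $u\ger r$, $\mu_r=\alpha\mu_u$ for $u\gel r$). Applied consistently, the forward-arc term is $\alpha\mu_r x_r=\alpha\cdot\alpha\mu_u\cdot x_r=\alpha^2\mu_u x_r$, and the sum does not collapse to $\mu_u\sum_{r\sim u}x_r$ unless $\alpha^2=1$. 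This is not a removable bookkeeping slip: the statement with $y_r=h_\alpha(v\kzg r)x_r$ is actually false in general. Take $D$ to be the single arc $1\ger 2$ (a tree, hence an $\alpha$\hyp{}monograph), $x=(1,1)^T$ for $\lambda=1$, and $v=1$; then $y=(1,\alpha)^T$, while $H^\alpha y=(\alpha^2,\bar\alpha)^T\neq y$ unless $\alpha=\pm1$.

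The correct transfer uses the conjugate, $y_r=\overline{h_\alpha(v\kzg r)}\,x_r$ (equivalently, the value of a walk from $r$ to $v$ rather than from $v$ to $r$). With that definition your computation closes exactly as you intended: a forward arc contributes $\alpha\cdot\overline{\alpha\mu_u}\cdot x_r=\overline{\mu_u}\,x_r$, a backward arc contributes $\bar\alpha\cdot\overline{\bar\alpha\mu_u}\cdot x_r=\overline{\mu_u}\,x_r$, a digon contributes $\overline{\mu_u}\,x_r$, and the total is $\overline{\mu_u}\,\lambda x_u=\lambda y_u$. For what it is worth, the paper's own proof commits the same inversion in the corresponding line (it replaces $h_\alpha(v\kzg u)$ by $\alpha h_\alpha(v\kzg r)$ for $r\in N^+_D(u)$, which is $\alpha^2 h_\alpha(v\kzg u)$, not $h_\alpha(v\kzg u)$), so you identified the right crux of the argument; the conjugation simply has to go the other way, both in \eqref{eq:defy} and in your substitution.
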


\begin{myproof}
Clearly, for every vertex $u$, the vector $x$ satisfies the summation rule
	\begin{align}\label{eq:srux}
		\lambda x_u=\sum_{r\in N_{\Gamma(D)}(u)}x_r.
	\end{align}
	Using the recursion \eqref{eq:stfunc2} as well as equations \eqref{eq:defy} and \eqref{eq:srux}, we deduce:
	\begin{align}
	\lambda y_u &=	\lambda {h_\alpha(v\kzg u)}{x_u}\\ &={\sum_{r\in N_{\Gamma(D)}(u)}{h_\alpha(v\kzg u)}x_r}\\
		&={\sum_{r\in N_D(u)}{h_\alpha(v\kzg u)}x_r}+{\sum_{r\in N^+_{D}(u)}{h_\alpha(v\kzg u)}x_r}+{\sum_{r\in N^-_{D}(u)}{h_\alpha(v\kzg u)}x_r}\\
		&=\sum_{r\in N_D(u)}{h_\alpha(v\kzg r)}{x_r}+\alpha\sum_{r\in N^+_{D}(u)}{h_\alpha(v\kzg r)}{x_r}+\bar{\alpha}\sum_{r\in N^-_{D}(u)}{h_\alpha(v\kzg r)}{x_r}\\
		&=\sum_{r\in N_D(u)}y_r+\alpha\sum_{r\in N^+_{D}(u)}y_r+\bar{\alpha}\sum_{r\in N^-_{D}(u)}y_r.
	\end{align}
	Comparing this to the mixed summation rule \eqref{eq:srule} from \Cref{srule}, we see that $y$ is as claimed.
\end{myproof}

Regarding \Cref{monoeigtransfer}, note that the construction \eqref{eq:defy} retains linear independence, i.e.,
every basis of eigenvectors of $\Gamma(D)$ can be converted into a basis of $\alpha$\hyp{}eigenvectors of $D$ 
(using the same reference vertex $v$ throughout).

\begin{example}
\Cref{fig:wmono2} depicts an $\alpha$\hyp{}monograph $D$. With respect to the indicated vertex order,
$(-1,-1,1,1,0,-1,1)^T$ is an eigenvector of $\Gamma(D)$ for eigenvalue $0$. 
Using equation  \eqref{eq:defy} and the store values given in 
the figure, one obtains the $\gamma$\hyp{}eigenvector  $(-\gamma,-\gamma^2,\gamma^2,1,0,-1,\gamma^2)^T$
for eigenvalue $0$ of $D$.
\end{example}

In preparation for the following section we define a variant of the function $h_{\alpha}$. 
As before, let $D$ be a connected mixed graph. Fix $u\in V(D)$ and let $W$ be a mixed walk $u=r_1,\ldots,r_k$ in $D$. 
Slightly changing equations \eqref{eq:stfunc1} and \eqref{eq:stfunc2}, we define
\begin{align}
g_\alpha(W_j) & = (-1)^{j+1} h_{\alpha}({W_j}),
\end{align}
for $j=1,\ldots,k$.
The properties mentioned in \Cref{fwbw,frev,fconcat} also hold for $g_{\alpha}(W)$, thus
justifying an alternative notion of $\alpha$\hyp{}store. Using this notion and the following
\namecref{def:mono2} instead of \Cref{def:mono1}, one can check that \Cref{samesto} and \Cref{stchar} remain valid for $g_{\alpha}(W)$ as well.

\begin{definition}\label{def:mono2}
A mixed graph is an $\alpha$\hyp{}monograph (of \kindtwo kind) if $g_\alpha(\vec{C})=1$ for all its cycles $C$.
\end{definition}

As a result, we can derive results analogous to (but slightly different from)
\Cref{monopart} and \Cref{1stspecf}:

\begin{theorem}\label{monopart2} 
A connected mixed graph $D$ is an
$\alpha$\hyp{}monograph (of \kindtwo kind) if and only if $V(D)$ can be partitioned
into sets $\ldots,V_{-\alpha^2},V_{-\alpha^1},V_{-\alpha^0},V_{\alpha^0},V_{\alpha^1},V_{\alpha^2},\ldots$ (some of which possibly empty)
such that digons occur only between pairs of sets $V_{\alpha^j}$, $V_{-\alpha^j}$ and any arc
starting in a set $V_{\alpha^j}$ ends in $V_{\alpha^{j-1}}$.
\end{theorem}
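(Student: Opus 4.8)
The plan is to replay the proof of \Cref{monopart} almost word for word, with $h_{\alpha}$ replaced by $g_{\alpha}$ and \Cref{def:mono1} replaced by \Cref{def:mono2}. First I would make explicit the one ingredient this needs: the $g_{\alpha}$\hyp{}analogue of \Cref{samesto} and \Cref{stchar}. This is available because $g_{\alpha}(W)=(-1)^{e(W)}h_{\alpha}(W)$, where $e(W)$ denotes the number of edges of the mixed walk $W$, and edge counts add under concatenation and double under reversal; hence \Cref{fwbw,frev,fconcat} transfer verbatim to $g_{\alpha}$ (this is exactly the remark made just before \Cref{def:mono2}). In particular, being an $\alpha$\hyp{}monograph of \kindtwo kind is equivalent to $g_{\alpha}(W')=g_{\alpha}(W'')$ for every pair of mixed walks $W',W''$ sharing their endpoints, i.e.\ to triviality of the $g_{\alpha}$\hyp{}store.

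For the ``only if'' direction I would fix a reference vertex $u\in V(D)$ and define a potential $p\colon V(D)\to\{\pm\alpha^{j}\}_{j\in\mathbb{Z}}$ by setting $p(r)=g_{\alpha}(W)$ for an arbitrary mixed walk $W$ from $u$ to $r$; this is well defined by the equivalence just stated, and the $g_{\alpha}$\hyp{}version of \Cref{fwbw} shows each value is indeed of the form $\pm\alpha^{j}$. Put $V_{c}=\{r\in V(D):p(r)=c\}$; connectedness of $D$ makes these sets a partition of $V(D)$. Now I would read the adjacency constraints off the recursion \eqref{eq:stfunc1}--\eqref{eq:stfunc2}: crossing a digon leaves $h_{\alpha}$ fixed but flips the parity, hence multiplies $p$ by $-1$, so a digon can only run between some $V_{c}$ and $V_{-c}$; crossing an arc multiplies $h_{\alpha}$ by $\alpha^{\pm1}$ and flips the parity, hence multiplies $p$ by $-\alpha^{\pm1}$, which places the two endpoints in the classes prescribed by the statement. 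The second family of index signs, the $V_{-\alpha^{j}}$, is forced here precisely because---unlike in the \kindone case---digons are no longer ``neutral'' for the store value.

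For the ``if'' direction I would assume a partition as in the statement and verify \Cref{def:mono2} directly. Given a cycle $C=r_{1}r_{2}\cdots r_{m}r_{1}$ and a traversal $\vec{C}$, the partition rules determine, for each edge $r_{i}r_{i+1}$, a unimodular factor $f_{i}\in\{-1,-\alpha,-\bar\alpha\}$ with $p(r_{i+1})=f_{i}\,p(r_{i})$ ($f_{i}=-1$ for a digon, $-\alpha$ or $-\bar\alpha$ for an arc according to its direction), and a short inspection expresses $f_{i}$ in terms of $h_{r_{i}r_{i+1}}$. Since $p$ is single\hyp{}valued it returns to its starting value after one turn and $p(r_{1})\neq0$, so $\prod_{i=1}^{m}f_{i}=1$; unwinding this product gives $h_{\alpha}(\vec{C})=(-1)^{m}$ and therefore $g_{\alpha}(\vec{C})=(-1)^{e(\vec{C})}h_{\alpha}(\vec{C})=(-1)^{2m}=1$. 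Alternatively one can mimic the converse in \Cref{monopart} verbatim: starting from a vertex of $V_{1}$ and tracking \eqref{eq:stfunc1}--\eqref{eq:stfunc2} along an arbitrary walk, the partition structure forces every walk from $u$ to a vertex of $V_{c}$ to have $g_{\alpha}$\hyp{}value $c$, which is the condition in the \Cref{stchar}\hyp{}analogue.

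The step I expect to be the genuine obstacle is the sign bookkeeping, on both sides: one must check that the index set is exactly $\{\pm\alpha^{j}\}$, that digons versus arcs shift a vertex between exactly the right classes, and---in the converse---that the product of the per\hyp{}edge factors around a cycle telescopes to $1$ rather than to some stray $\pm1$. This is delicate only because the $(-1)^{j+1}$ built into $g_{\alpha}$ interacts with \emph{both} the digon steps and the arc steps; everything else (well\hyp{}definedness of $p$, exhaustiveness of the partition, the reduction to \Cref{def:mono2}) is routine once the $g_{\alpha}$\hyp{}analogue of \Cref{stchar} has been nailed down.
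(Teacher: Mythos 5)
Your strategy is exactly the one the paper intends: the paper gives no explicit proof of \Cref{monopart2}, only the remark that the $g_\alpha$\hyp{}analogues of \Cref{samesto} and \Cref{stchar} let one replay the proof of \Cref{monopart}, and your reduction via $g_\alpha(W)=(-1)^{e(W)}h_\alpha(W)$ to a well\hyp{}defined potential $p(r)=g_\alpha(u\kzg r)$ is correct. The problem is precisely the sign bookkeeping you defer: it does not come out as claimed. Across a digon $p$ is multiplied by $-1$, fine; but across an arc $r\ger s$ it is multiplied by $-\alpha$, so an arc starting in $V_{\alpha^j}$ ends in $V_{-\alpha^{j+1}}$ --- the sign flips. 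That is \emph{not} ``the class prescribed by the statement,'' namely $V_{\alpha^{j-1}}$, and no relabelling of the index set repairs this in general, since $-\alpha^{j+1}=\alpha^{j-1}$ forces $\alpha^2=-1$, i.e.\ $\alpha=\pm i$ (which happens to be the paper's only worked example of a monograph of \kindtwo kind). For $\alpha=\gamma$ the statement as printed actually fails: the mixed $4$\hyp{}cycle $u_1\ger u_2\ger u_3\ger u_4\kla u_1$ has $h_\gamma(\vec{C})=\gamma^3=1$ and $g_\gamma(\vec{C})=(-1)^{4}\cdot 1=1$, so it is a $\gamma$\hyp{}monograph of \kindtwo kind, yet any labelling obeying ``digon: multiply by $-1$; arc: multiply by $\bar\gamma$'' accumulates a net factor $(-1)\cdot\bar\gamma^{3}=-1\neq 1$ around the cycle, so no partition of the printed form exists.

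In fact your two directions silently prove two different statements. In the converse you take the per\hyp{}edge factors to be $\{-1,-\alpha,-\bar\alpha\}$, which is the \emph{corrected} partition rule (arcs from $V_c$ to $V_{-\alpha c}$), and with that rule your telescoping computation $h_\alpha(\vec{C})=(-1)^m$, hence $g_\alpha(\vec{C})=1$, is right; under the printed rule the factors would be $\{-1,\alpha,\bar\alpha\}$ and the product condition yields $h_\alpha(\vec{C})=(-1)^{d}$ ($d$ the number of digons), giving $g_\alpha(\vec{C})=(-1)^{f+b}$, which is $1$ only when the cycle has an even number of arcs. In the forward direction you assert the potential lands in the printed classes, which it does not. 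The repair is to restate the arc condition as ``every arc starting in $V_{\alpha^j}$ ends in $V_{-\alpha^{j+1}}$ and every arc starting in $V_{-\alpha^j}$ ends in $V_{\alpha^{j+1}}$'' (equivalently, after an obvious relabelling, with $j+1$ replaced by $j-1$); with that statement both halves of your argument close as written.
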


\begin{theorem}\label{1stspecg} 
Let $D$ be an $\alpha$\hyp{}monograph (of \kindtwo kind). Then, $\sigma_\alpha(D)=-\sigma(\Gamma(D))$.
\end{theorem}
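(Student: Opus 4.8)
The plan is to mimic the proof of \Cref{1stspecf} (equivalently \Cref{tree_underly}), but now tracking the sign twist built into $g_\alpha$. Recall that \Cref{1stspecf} rests on \Cref{monoeigtransfer}: for a \kindone monograph, the store values $h_\alpha(v\kzg r)$ give a diagonal unitary change of basis converting eigenvectors of $\Gamma(D)$ into $\alpha$\hyp{}eigenvectors of $D$ for the \emph{same} eigenvalue. The key observation is that \Cref{stchar} and \Cref{samesto} remain valid with $g_\alpha$ in place of $h_\alpha$ (as already noted in the paragraph preceding \Cref{def:mono2}), so for a \kindtwo monograph the quantity $g_\alpha(v\kzg r)$ is well-defined independently of the chosen walk $v\kzg r$. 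So first I would state this: fix a reference vertex $v\in V(D)$ and, for each $r\in V(D)$, let $g_r := g_\alpha(v\kzg r)$ be the common $g_\alpha$-value of any mixed walk from $v$ to $r$.

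Next I would define the analogue of \eqref{eq:defy}: given an eigenvector $x=[x_u]$ of $\Gamma(D)$ for eigenvalue $\lambda$, set $y=[y_r]$ with $y_r = g_r\, x_r$. The claim is that $y$ is an $\alpha$\hyp{}eigenvector of $D$ for eigenvalue $-\lambda$. To verify this I would run the same computation as in the proof of \Cref{monoeigtransfer}, starting from $\lambda x_u = \sum_{r\in N_{\Gamma(D)}(u)} x_r$. The only new ingredient is the recursion for $g_\alpha$ along an edge: if $W_{j+1}$ extends $W_j$ by one step to a neighbour, then because $g_\alpha(W_{j+1}) = (-1)^{j+2}h_\alpha(W_{j+1}) = -(-1)^{j+1}h_\alpha(W_{j+1})$ and the $h_\alpha$-recursion \eqref{eq:stfunc2} multiplies by $1$, $\alpha$ or $\bar\alpha$ according to the edge type, the $g_\alpha$-recursion multiplies by $-1$, $-\alpha$ or $-\bar\alpha$ respectively. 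Concretely, if $u\kla r$ then $g_r = -g_u$; if $u\ger r$ then $g_r = -\alpha g_u$, i.e. $g_u = -\bar\alpha g_r$; if $u\gel r$ then $g_r = -\bar\alpha g_u$, i.e. $g_u = -\alpha g_r$. Here one uses that a walk $v\kzg u$ followed by the single edge $u$--$r$ is a valid walk $v\kzg r$, so the \kindtwo-store well-definedness (the $g_\alpha$-analogue of \Cref{stchar}\ref{stchar4}) lets us identify its $g_\alpha$-value with $g_r$.

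Then the substitution is mechanical. Multiply $\lambda x_u = \sum_{r\in N_{\Gamma(D)}(u)} x_r$ by $g_u$, split the neighbourhood into $N_D(u)$, $N^+_D(u)$, $N^-_D(u)$, and rewrite each $g_u x_r$ using the boxed relations: for $r\in N_D(u)$, $g_u x_r = -g_r x_r = -y_r$; for $r\in N^+_D(u)$, $g_u x_r = -\bar\alpha g_r x_r = -\bar\alpha y_r$; for $r\in N^-_D(u)$, $g_u x_r = -\alpha g_r x_r = -\alpha y_r$. Hence $\lambda y_u = -\big(\sum_{r\in N_D(u)} y_r + \bar\alpha\sum_{r\in N^+_D(u)} y_r + \alpha\sum_{r\in N^-_D(u)} y_r\big)$. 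Wait — I need to be careful about which of $\alpha,\bar\alpha$ goes with $N^+$ versus $N^-$ so that it matches \eqref{eq:srule}; \eqref{eq:srule} has $\alpha$ on $u\ger v$ and $\bar\alpha$ on $u\gel v$. So I would instead use the relations in the form $g_r = -\alpha g_u$ for $u\ger r$ and $g_r = -\bar\alpha g_u$ for $u\gel r$, substitute $y_r = g_r x_r$ directly on the right-hand side after replacing $x_r = g_r^{-1} y_r = \bar g_r y_r$ (note $|g_r|=1$), obtaining $\lambda y_u = g_u\sum_r \bar g_r y_r$, and then $g_u \bar g_r$ equals $-1$, $-\bar\alpha$, $-\alpha$ for the digon, out-arc, in-arc cases respectively; this gives $\lambda y_u = -\sum_{r\in N_D(u)} y_r - \bar\alpha\sum_{r\in N^+_D(u)} y_r - \alpha\sum_{r\in N^-_D(u)} y_r$, which is exactly $(-\lambda) y_u = \sum_{r\in N_D(u)} y_r + \alpha\sum_{r\in N^+_D(u)} y_r + \bar\alpha\sum_{r\in N^-_D(u)} y_r$ after negating, i.e. the summation rule \eqref{eq:srule} with eigenvalue $-\lambda$. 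So $y$ is an $\alpha$\hyp{}eigenvector of $D$ for $-\lambda$. Finally, since $r\mapsto g_r$ is a diagonal unitary map, it sends a basis of eigenvectors of $\Gamma(D)$ (real symmetric, hence such a basis exists) to a linearly independent family of $\alpha$\hyp{}eigenvectors of $D$; counting multiplicities, $\sigma_\alpha(D) = -\sigma(\Gamma(D))$.

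The main obstacle is purely bookkeeping: getting the $(-1)$, $\bar\alpha$ versus $\alpha$ assignments right in the $g_\alpha$-recursion and making sure the final identity lands on \eqref{eq:srule} rather than its conjugate. There is no conceptual difficulty beyond what \Cref{monoeigtransfer} already contains, provided one has established (as the text asserts) that \Cref{samesto} and \Cref{stchar} carry over to $g_\alpha$ so that the $g_r$ are well-defined. One should also note connectedness is used only to guarantee a walk $v\kzg r$ exists for every $r$; if $D$ is disconnected the result holds componentwise and hence for the whole graph.
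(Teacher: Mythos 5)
Your overall strategy is exactly the intended one: the paper offers no separate proof of \Cref{1stspecg}, merely asserting that it follows from the $g_\alpha$-analogues of \Cref{samesto} and \Cref{stchar} in the same way that \Cref{1stspecf} follows via \Cref{monoeigtransfer}, and that is precisely what you carry out. The well-definedness of $g_r$, the recursion $g_r=-g_u$, $g_r=-\alpha g_u$, $g_r=-\bar\alpha g_u$ for $u\kla r$, $u\ger r$, $u\gel r$ respectively, and the preservation of linear independence under the diagonal unitary map are all correct.

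However, the final identification contains a genuine slip --- exactly the one you flagged and then did not actually resolve. Your (correct) computation yields
\[
-\lambda y_u \;=\; \sum_{r\in N_D(u)} y_r \;+\; \bar\alpha\sum_{r\in N^+_D(u)} y_r \;+\; \alpha\sum_{r\in N^-_D(u)} y_r,
\]
with $\bar\alpha$ on the out-neighbours and $\alpha$ on the in-neighbours. This is the summation rule for $H^{\bar\alpha}(D)=\overline{H^\alpha(D)}$, not the rule \eqref{eq:srule} for $H^\alpha(D)$; your sentence ``which is exactly \eqref{eq:srule} after negating'' silently swaps $\alpha$ and $\bar\alpha$, so what you have actually shown is that $y$ is an eigenvector of $H^{\bar\alpha}$ for $-\lambda$. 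The theorem survives with a one-line repair: either define $y_r=\overline{g_r}\,x_r$ (then $h_{ur}\overline{g_r}=-\overline{g_u}$ in all three edge cases and \eqref{eq:srule} holds verbatim with eigenvalue $-\lambda$), or take $x$ real and pass to $\bar y$, or simply note that $H^{\bar\alpha}=(H^\alpha)^{T}$ is cospectral with $H^\alpha$. In fairness, the same $\alpha\leftrightarrow\bar\alpha$ swap already occurs in the paper's displayed proof of \Cref{monoeigtransfer} (the line introducing $\alpha\sum_{r\in N^+_D(u)}h_\alpha(v\kzg r)x_r$ should carry $\bar\alpha$ by the recursion \eqref{eq:stfunc2}), so you have reproduced the paper's bookkeeping error rather than introduced a new one; but since you explicitly paused to get this right, you should actually fix it rather than assert the match.
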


\begin{example}
The mixed graph $D$ shown in \Cref{fig:monostg} is an $i$\hyp{}monograph of \kindtwo kind.
We have $\sigma_i(D)=\{-3,1^{(3)}\}$ and $\sigma(\Gamma(D))=\{3,-1^{(3)}\}$.
Clearly, $D$ is not an $i$\hyp{}monograph of \kindone kind.
\end{example}

\begin{figure}
	\centering
		\includegraphics[scale=0.6]{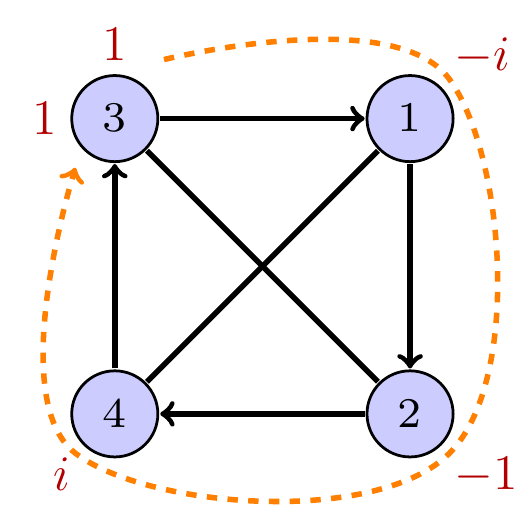}
\caption{An $i$\hyp{}monograph of \kindtwo kind}
	\label{fig:monostg}
\end{figure}

\section{\bf Spectral Radius}

The spectral radius $\rho(M)$ of a complex matrix $M$ is defined as the largest modulus among its eigenvalues.
If $\| \cdot \|$ is any matrix norm, we have $\rho(M) \leq \| M\|$ (cf.\ Theorem 5.6.9 in \cite{Horn}).
Using the maximum norm $\| \cdot \|_\infty$, one immediately obtains the classic upper bound 
$
\rho(G) \le \Delta_{G}
$
on the spectral radius $\rho(G):=\rho(A(G))$ of an undirected graph $G$. Supposing $G$ is connected, 
equality holds if and only if $G$ is regular. Considering a mixed graph $D$ and its $\alpha$\hyp{}Hermitian adjacency matrix $H_\alpha(D)$ instead,
it follows from \Cref{halpha} that 
$
\| H^\alpha(D) \|_\infty = \| A(\Gamma(D)) \|_\infty,
$
since all nonzero entries of $H^\alpha(D)$ have modulus $1$. Hence,
$
\rho_{\alpha}(D) \le \Delta_{\Gamma(D)}.
$
Interestingly, $\alpha$\hyp{}monographs come into play if one wants to characterize when equality holds:

\begin{theorem}\label{vinduc}
Let $D$ be a connected mixed graph. Then,
$
   \rho_{\alpha}(D) = \Delta_{\Gamma(D)}
$
if and only if $D$ is a regular $\alpha$\hyp{}monograph (of \kindone or \kindtwo kind).
\end{theorem}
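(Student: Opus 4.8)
The plan is to prove the two implications separately. The forward (``if'') direction is immediate from the spectral-transfer results already established, while the converse rests on an extremal-eigenvector / triangle-inequality argument that simultaneously forces regularity and the monograph property.

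For the ``if'' direction, assume $D$ is a regular $\alpha$\hyp{}monograph. Then $\Gamma(D)$ is connected and regular, so, as noted above, $\rho(\Gamma(D))=\Delta_{\Gamma(D)}$. If $D$ is of \kindone kind, \Cref{1stspecf} gives $\sigma_\alpha(D)=\sigma(\Gamma(D))$; if it is of \kindtwo kind, \Cref{1stspecg} gives $\sigma_\alpha(D)=-\sigma(\Gamma(D))$. In both cases the multiset of moduli of the $\alpha$\hyp{}eigenvalues equals that of the eigenvalues of $\Gamma(D)$, hence $\rho_\alpha(D)=\rho(\Gamma(D))=\Delta_{\Gamma(D)}$.

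For the ``only if'' direction, put $\Delta:=\Delta_{\Gamma(D)}$ and suppose $\rho_\alpha(D)=\Delta$. Since every $\alpha$\hyp{}eigenvalue is real, either $\Delta$ or $-\Delta$ lies in $\sigma_\alpha(D)$; fix an $\alpha$\hyp{}eigenvector $y$ for this eigenvalue and let $M:=\max_{v\in V(D)}|y(v)|>0$, attained at some $u_0$. Taking absolute values in the summation rule \eqref{eq:srule} at $u_0$ and using $|\alpha|=1$ yields $\Delta M\le\sum_{v\sim u_0}|y(v)|\le\deg(u_0)\,M\le\Delta M$; equality throughout forces $\deg(u_0)=\Delta$ and $|y(v)|=M$ for all neighbours $v$ of $u_0$. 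Propagating this along paths and using connectedness shows $\Gamma(D)$ is $\Delta$-regular and $|y|\equiv M$. Next, the equality case of the triangle inequality applied at each vertex $u$ fixes the common phase of the $\deg(u)$ summands in \eqref{eq:srule} (namely the phase of $\pm y(u)$ according to the sign of the eigenvalue), from which one reads off, for every edge traversed from $a$ to $b$, the relation $y(b)=\overline{h_{ab}}\,y(a)$ when the eigenvalue is $+\Delta$, and $y(b)=-\overline{h_{ab}}\,y(a)$ when it is $-\Delta$.

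Finally, multiply these per-edge relations along any closed walk $\vec{C}=v_1,\dots,v_{\ell+1}=v_1$ traversing a cycle $C$ of $D$. In the $+\Delta$ case the factors telescope to $y(v_1)=\overline{h_\alpha(\vec{C})}\,y(v_1)$, whence $h_\alpha(\vec{C})=1$ and $D$ is an $\alpha$\hyp{}monograph of \kindone kind. In the $-\Delta$ case they telescope to $y(v_1)=(-1)^{\ell}\,\overline{h_\alpha(\vec{C})}\,y(v_1)$, whence $(-1)^{\ell}h_\alpha(\vec{C})=1$; since $\vec{C}$ has $\ell+1$ vertices, the definition of $g_\alpha$ gives $g_\alpha(\vec{C})=(-1)^{\ell}h_\alpha(\vec{C})$, so $g_\alpha(\vec{C})=1$ and $D$ is an $\alpha$\hyp{}monograph of \kindtwo kind. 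Either way $D$ is a regular $\alpha$\hyp{}monograph, completing the argument. I expect the main obstacle to be precisely this last bookkeeping: one must state the equality condition of the triangle inequality against the correct reference phase, verify that the per-edge multipliers are exactly $\overline{h_{ab}}\in\{1,\alpha,\bar\alpha\}$ (consistently with the convention $h_\alpha(\vec{C})=\alpha^{x-y}$), and track the parity factor so that the $-\Delta$ case lands on the second-kind definition rather than the first; the regularity deduction and the input $\rho(\Gamma(D))=\Delta$ for connected regular graphs are routine.
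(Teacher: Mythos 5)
Your proof is correct and follows essentially the same route as the paper: the hard direction is the same extremal-eigenvector argument, forcing equality in the triangle inequality at every vertex and reading off the per-edge phase relations (you telescope these around cycles directly, where the paper packages them as the vertex partitions of \Cref{monopart} and \Cref{monopart2}, but this is the same computation). Your easy direction, via \Cref{1stspecf} and \Cref{1stspecg} plus $\rho(\Gamma(D))=\Delta$ for connected regular graphs, is a clean substitute for the paper's explicit eigenvector construction and is equally valid.
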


\begin{myproof}
	Let $x=[x_u]_{u\in V(D)}$ be an $\alpha$\hyp{}eigenvector of $D$ for $\alpha$\hyp{}eigenvalue $\lambda$.
	Choose $v\in V(D)$ such that  $\vert x_v\vert$ is maximal. We may assume $\vert x_v\vert=1$.
	Using \Cref{srule}, we can deduce that
	\begin{align}
	\left\vert \lambda \right\vert = \left\vert \lambda x_{v} \right\vert  & 
	\le  \sum_{u\in N(v)} \vert x_{u}\vert  + \sum_{u\in N^+(v)} \vert \alpha x_{u}\vert  +  \sum_{u\in N^-(v)} \vert \overline{\alpha} x_{u}\vert  \label{A1}  \\[0.5em]
	& =  \sum_{u\in N(v)} \vert x_{u}\vert  + \sum_{u\in N^+(v)} \vert x_{u}\vert  +  \sum_{u\in N^-(v)} \vert x_{u}\vert   \\[0.5em]
	& \le  \sum_{u\in N(v)} \vert x_{v}\vert  + \sum_{u\in N^+(v)} \vert x_{v}\vert  +  \sum_{u\in N^-(v)} \vert x_{v}\vert  \label{B1}  \\[0.5em]
	& = \deg_{\Gamma(D)} (v)     \\[0.5em]
	& \le \Delta_{\Gamma(D)}.  \label{C1}
	\end{align}
	
	Suppose that $\lambda$ is an $\alpha$\hyp{}eigenvalue of $D$ with largest modulus.
	The condition $\rho_{\alpha}(D) = \Delta_{\Gamma(D)}$ holds if and only if equality holds in all three conditions \eqref{A1}, \eqref{B1} and \eqref{C1}.
	Equality in \eqref{C1} is achieved if any only if $D$ (resp.\ $\Gamma(D)$) is regular of degree $\Delta_{\Gamma(D)}$.
	Since $\vert x_{v}\vert$ is maximal, equality in \eqref{B1} occurs exactly if $\vert x_{u}\vert  = \vert x_{v}\vert = 1$ for all $u\in N_{\Gamma(D)}(v)$.
	Repeat this argument for all vertices $u\in N_{\Gamma(D)}$, each time taking the role of $v$. Since $\Gamma(D)$ is connected,
	we successively prove $\vert x_{u}\vert  = 1$ for all $u\in V(D)$.
	
	Equality holds in the complex triangle inequality \eqref{A1} if and only if  $\arg(\lambda x_{v}) = \arg(x_u)$ for all $u\in N_{\Gamma(D)}(v)$.
	In the following, we shall skip the trivial case $\lambda=0=\rho_{\alpha(D)}$.
	Let $\arg (\alpha) = \theta\in\mathbb{R}$. Consider the following cases:
	
	\begin{enumerate}[label=(\roman*)]
		\item \label{item11} Case $\lambda > 0$:
		\begin{itemize}
		\item If $u\in N(v)$, then $\arg (x_{u}) = \arg (\lambda x_{v})$, so that $x_{u} = x_{v}$.
		\item If $u\in N^+(v)$, then $\arg (\alpha x_{u}) = \arg (\lambda x_{v})$, so that $x_{u} = \bar\alpha x_{v}$.
		\item If $u\in N^-(v)$, then $\arg (\bar\alpha x_{u}) = \arg (\lambda x_{v})$, so that $x_{u} = \alpha x_{v}$.
		\end{itemize}
		Assigning each vertex $u\in V(D)$ to a set $V_\theta$ with $\theta=x_{u}/x_{v}$, we obtain a partition as mentioned in \Cref{monopart}.

		
		\item \label{item22} Case $\lambda < 0$:
		\begin{itemize}
		\item If $u\in N(v)$, then $\arg (x_{u}) = \arg (\lambda x_{v})$, so that $x_{u} = -x_{v}$.
		\item If $u\in N^+(v)$, then $\arg (\alpha x_{u}) = \arg (\lambda x_{v})$, so that $x_{u} = -\bar\alpha x_{v}$.
		\item If $u\in N^-(v)$, then $\arg (\bar\alpha x_{u}) = \arg (\lambda x_{v})$, so that $x_{u} = -\alpha x_{v}$.
		\end{itemize}
		Assigning each vertex $u\in V(D)$ to a set $V_\theta$ with $\theta=x_{u}/x_{v}$, we obtain a partition as mentioned in \Cref{monopart2}.


	\end{enumerate}
	
	Conversely, let $D$ be a connected mixed graph having a vertex partition according to one of the cases \ref{item11} or \ref{item22}.
	Construct a vector $x=[x_u]_{u\in V(D)}$ as follows. Set $x_u:= q$ for any $u\in V_q$. It is straightforward to show that $x$
	is an $\alpha$\hyp{}eigenvector for an eigenvalue of modulus $\rho_\alpha(D)$.
\end{myproof}


\begin{corollary}\label{partbip}
Let $D$ be a connected mixed graph. Suppose that $\alpha^k \not= -\alpha^l$ for all $k,l\in\mathbb{Z}$. 
Then,
$
   \rho_{\alpha}(D) = \Delta_{\Gamma(D)}
$
if and only if $D$ is a regular $\alpha$\hyp{}monograph of \kindone kind.
\end{corollary}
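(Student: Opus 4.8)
The plan is to derive Corollary~\ref{partbip} directly from Theorem~\ref{vinduc} by showing that, under the hypothesis $\alpha^k\neq-\alpha^l$ for all $k,l\in\mathbb{Z}$, a regular $\alpha$\hyp{}monograph of \kindtwo kind is necessarily also of \kindone kind, so that the disjunction ``of \kindone or \kindtwo kind'' in Theorem~\ref{vinduc} collapses to ``of \kindone kind.'' First I would invoke Theorem~\ref{vinduc}: $\rho_\alpha(D)=\Delta_{\Gamma(D)}$ holds if and only if $D$ is a regular $\alpha$\hyp{}monograph of \kindone or \kindtwo kind. So it suffices to argue that, for our particular $\alpha$, being a \kindtwo monograph forces being a \kindone monograph (the reverse implication being false in general, cf.\ the example after Theorem~\ref{1stspecg}, but irrelevant here since \kindone always suffices for the equality).

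The key step is the comparison of Definitions~\ref{def:mono1} and~\ref{def:mono2} via the relation $g_\alpha(\vec C)=(-1)^{|V(C)|+1}h_\alpha(\vec C)$ obtained from the defining recursion $g_\alpha(W_j)=(-1)^{j+1}h_\alpha(W_j)$, since for a closed walk $\vec C$ traversing a cycle $C$ the index $j$ runs up to $k=|V(C)|+1$. Thus if $D$ is a \kindtwo monograph, then for every cycle $C$ we have $h_\alpha(\vec C)=(-1)^{|V(C)|+1}$, i.e.\ $h_\alpha(\vec C)\in\{1,-1\}$. Now $h_\alpha(\vec C)=\alpha^{x-y}$ for integers $x,y$ (the forward/backward arc counts), so $h_\alpha(\vec C)=\alpha^m$ for some $m\in\mathbb{Z}$. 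If $h_\alpha(\vec C)=-1$ for some cycle, then $\alpha^m=-1=-\alpha^0$, contradicting the hypothesis $\alpha^k\neq-\alpha^l$. Hence $h_\alpha(\vec C)=1$ for every cycle, which is exactly Definition~\ref{def:mono1}. Therefore a regular \kindtwo monograph with this $\alpha$ is a regular \kindone monograph, and the ``if'' direction of the corollary follows; the ``only if'' direction is immediate since a \kindone monograph is in particular covered by Theorem~\ref{vinduc}.

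I expect no serious obstacle here — the argument is essentially bookkeeping with the parity sign in the definition of $g_\alpha$. The one point that needs a little care is making the relation $g_\alpha(\vec C)=(-1)^{|V(C)|+1}h_\alpha(\vec C)$ precise: one must be clear that a closed walk $\vec C$ traversing a $k'$-cycle $C$ visits $k'+1$ vertices (returning to the start), so the exponent is $k'+1+1=k'+2\equiv k'\pmod 2$ in the sign, or equivalently just track that the sign is $(-1)^{k'+1}$ where $k'=|V(C)|$; either way $g_\alpha(\vec C)=1$ is equivalent to $h_\alpha(\vec C)=(-1)^{k'+1}\in\{\pm1\}$, and the hypothesis rules out the $-1$ case. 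Once that is pinned down, the corollary drops out of Theorem~\ref{vinduc} in two lines.
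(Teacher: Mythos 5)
Your argument is correct, but it reaches the conclusion by a different route than the paper. The paper works at the level of vertex partitions: it takes the partition arising in case (ii) of the proof of \Cref{vinduc} (equivalently, the \kindtwo partition of \Cref{monopart2}), observes that the hypothesis $\alpha^k\neq-\alpha^l$ forces the sets $V_{\alpha^j}$ and $V_{-\alpha^j}$ to be genuinely distinct so that $V'=\bigcup_j V_{\alpha^j}$ and $V''=\bigcup_j V_{-\alpha^j}$ give a bipartition of $D$, and then merges the pairs $V_{\alpha^k}\cup V_{-\alpha^k}$ to exhibit a partition of the type in \Cref{monopart}, i.e.\ a \kindone monograph. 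You instead collapse \kindtwo to \kindone directly at the level of cycle values: $g_\alpha(\vec C)=\pm h_\alpha(\vec C)$, so a \kindtwo monograph has $h_\alpha(\vec C)\in\{1,-1\}$ on every cycle, and since $h_\alpha(\vec C)=\alpha^{x-y}$ the hypothesis (with $l=0$) excludes the value $-1$. Your version is more elementary in that it bypasses the partition theorems entirely and uses only \Cref{def:mono1,def:mono2} plus the identity $h_\alpha(\vec C)=\alpha^{x-y}$; the paper's version has the side benefit of making visible that $D$ must be bipartite whenever case (ii) occurs. Two cosmetic points: your sign exponent wavers between $(-1)^{|V(C)|+1}$ and $(-1)^{|V(C)|}$ (the correct bookkeeping, with the closed walk listing $|V(C)|+1$ vertices, gives $g_\alpha(\vec C)=(-1)^{|V(C)|}h_\alpha(\vec C)$), but as you note this is immaterial since only membership in $\{\pm1\}$ is used; and you have swapped the labels ``if'' and ``only if'' in the last sentence --- the collapsing argument serves the direction ``$\rho_\alpha(D)=\Delta_{\Gamma(D)}$ implies \kindone monograph,'' while the converse is the part that is immediate from \Cref{vinduc}. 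Neither affects the validity of the proof.
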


\begin{myproof}
The given condition on $\alpha$ guarantees that the partition arising in case \ref{item22} in the proof of \Cref{vinduc}
can be converted into a bipartition $V(D)=V' \dot{\cup} V''$, with $V'=V_{\alpha^0} \cup V_{\alpha^1} \cup \ldots$ and 
$V''=V_{-\alpha^0} \cup V_{-\alpha^1} \cup \ldots$, such that the subgraphs induced by $V'$ and $V''$ have no edges.
So $D$ would be bipartite in this case.
Forming pairwise unions $V_{\alpha^k}\cup V_{-\alpha^k}$, it becomes apparent that
$D$ must be an $\alpha$\hyp{}monograph of \kindone kind.
\end{myproof}

To conclude, let us briefly revisit the three special choices $i$, $\omega$ and $\gamma$ for $\alpha$.
Choosing $\alpha=\gamma$, the conditions of \Cref{partbip} are met, but not for $\alpha\in\{i, \omega\}$.
In view of this, $\gamma$ appears to be an interesting choice for $\alpha$. Moreover, since $\gamma$ is a third root of unity,
among all candidates satisfying the conditions of \Cref{partbip}, choosing $\alpha=\gamma$ will yields
a minimal number of sets in the monograph vertex partition.




\begin{bibdiv}
\begin{biblist}

\bib{Biggs}{book}{
    Author = {{Biggs}, Norman},
    Title = {{Algebraic graph theory}},
    Edition = {2nd ed.},
    Pages = {205},
    Year = {1994},
    Publisher = {Cambridge University Press},
}

\bib{Bru}{article}{
 Author = {{Brualdi}, Richard A.},
 Title = {{Spectra of digraphs}},
 Journal = {{Linear Algebra Appl.}},
 Volume = {432},
 Number = {9},
 Pages = {2181--2213},
 Year = {2010}, 
}

\bib{BM1}{article}{
 Author = {{Guo}, Krystal},
 Author = {{Mohar}, Bojan},
 Title = {{Hermitian adjacency matrix of digraphs and mixed graphs}},
 Journal = {{J. Graph Theory}},
 Volume = {85},
 Number = {1},
 Pages = {217--248},
 Year = {2017},
}

\bib{HA}{article}{
 Author = {{Harary}, Frank},
 Title = {{The determinant of the adjacency matrix of a graph}},
 Journal = {{SIAM Rev.}},
 Volume = {4},
 Pages = {202--210},
 Year = {1962},
}

\bib{Horn}{book}{
 Author = {{Horn}, Roger A.},
 Author = {{Johnson}, Charles R.},
 Title = {{Matrix analysis. Reprinted with corrections}},
 Year = {1990},
 Publisher = {Cambridge University Press},
}

\bib{jova}{article}{
    Author = {{Jovanovi\'c}, Irena M.},
    Title = {{Non-negative spectrum of a digraph}},
    Journal = {{Ars Math. Contemp.}},
    ISSN = {1855-3966; 1855-3974/e},
    Volume = {12},
    Number = {1},
    Pages = {167--182},
    Year = {2017},
}

\bib{LiYu}{article}{
 Author = {{Li}, Honghai},
 Author = {{Yu}, Teng},
 Title = {{Hermitian adjacency spectrum of Cayley digraphs over dihedral group}},
 Journal = {{Algebra Colloq.}},
 Volume = {27},
 Number = {1},
 Pages = {121--130},
 Year = {2020},
}

\bib{Liu}{article}{
 Author = {{Liu}, Jianxi},
 Author = {{Li}, Xueliang},
 Title = {{Hermitian-adjacency matrices and Hermitian energies of mixed graphs}},
 Journal = {{Linear Algebra Appl.}},
 Volume = {466},
 Pages = {182--207},
 Year = {2015},
}

\bib{BM2}{article}{
 Author = {{Mohar}, Bojan},
 Title = {{A new kind of Hermitian matrices for digraphs}},
 Journal = {{Linear Algebra Appl.}},
 Volume = {584},
 Pages = {343--352},
 Year = {2020},
}

\bib{YWGQ}{article}{
 Author = {{Yuan}, Bo-Jun},
 Author = {{Wang}, Yi},
 Author = {{Gong}, Shi-Cai},
 Author = {{Qiao}, Yun},
 Title = {{On mixed graphs whose Hermitian spectral radii are at most 2}},
 Journal = {{Graphs Comb.}},
 Volume = {36},
 Number = {5},
 Pages = {1573--1584},
 Year = {2020},
}

\end{biblist}
\end{bibdiv}

\end{document}